\documentclass[12pt,reqno]{amsart}

\setlength{\columnseprule}{0.4pt}
\setlength{\topmargin}{0cm}
\setlength{\oddsidemargin}{.25cm}
\setlength{\evensidemargin}{.25cm}
\setlength{\textheight}{22.5cm}
\setlength{\textwidth}{15.5cm}

\usepackage{amsfonts,amsmath,amsthm,bbm}
\usepackage{amssymb,epsfig}
\usepackage{cases}
\usepackage[usenames]{color}
\usepackage{enumerate} 

\usepackage{color} %color
\definecolor{vert}{rgb}{0,0.6,0}

\theoremstyle{plain}
\newtheorem{thm}{Theorem}[section]

\newtheorem{defn}{Definition}

\newtheorem{lem}[thm]{Lemma}
\newtheorem{cor}[thm]{Corollary}
\newtheorem{prop}[thm]{Proposition}
\theoremstyle{remark}
\newtheorem{rem}{\bf{Remark}}
\numberwithin{equation}{section}

%colors

%Characters

\newcommand{\N}{\mathbb{N}}

\newcommand{\R}{\mathbb{R}}

\newcommand{\T}{\mathbb{T}}
\newcommand{\Z}{\mathbb{Z}}

\newcommand{\cS}{\mathcal{S}}

%functional space

%domain

%Greek alphabet
\newcommand{\al}{\alpha}
\newcommand{\gam}{\gamma}

\newcommand{\ep}{\varepsilon}

%Symbols
\newcommand{\ol}{\overline}

\newcommand{\diag}{{\rm diag}}%%%%GGG
%%%GGG
%%%GGG

%%%%%%%%%%%%%%%%%%%%%%%%%%%%%%%%%%%%%%%%%%%%%%%%%%%%%%%%%%%%%%%%%%%%%%%%%%%%%%%%%%%%%%%%%%%%%%%%%%%%%%%%%%%%%%%%%%%%%%%%%%%%%%%%%%%%%%%%%%%%%%%%%%

\begin{document}

\title[Optimal rates of convergence]{Remarks on Optimal rates of convergence in periodic homogenization of linear elliptic equations in non-divergence form}

\author{Xiaoqin Guo}
\address[Xiaoqin Guo]
{
Department of Mathematics, 
University of Wisconsin Madison, Van Vleck hall, 480 Lincoln drive, Madison, WI 53706, USA}
\email{xguo@math.wisc.edu}

\author{Hung V. Tran}
\address[Hung V. Tran]
{
Department of Mathematics, 
University of Wisconsin Madison, Van Vleck hall, 480 Lincoln drive, Madison, WI 53706, USA}
\email{hung@math.wisc.edu}

\author{Yifeng Yu}
\address[Yifeng Yu]
{
Department of Mathematics, 
University of California, Irvine,
410G Rowland Hall, Irvine, CA 92697, USA}
\email{yyu1@math.uci.edu}

\dedicatory{Dedicated to Professor Hitoshi Ishii with our admiration}

\thanks{
The work of HT is partially supported by NSF grant DMS-1664424 and NSF CAREER award DMS-1843320.
}

\date{\today}
\keywords{Homogenization; periodic setting; linear non-divergence form elliptic equations; optimal rates of convergence}
\subjclass[2010]{
35B27, %periodic homogenization
35B40, %Asymptotic behavior of solutions, 
35D40, %Viscosity solutions,
35J25, %Boundary value problems for second-order elliptic equations
49L25 %Viscosity solutions
}

\maketitle

%%%%%%%%%%%%%%%%%%%%%%%%%%%%%%%%%%%%%%%

\begin{abstract}
We study and characterize the optimal rates of convergence in periodic homogenization of linear  elliptic equations in non-divergence form. 
We obtain that the optimal rate of convergence is either $O(\ep)$ or $O(\ep^2)$ depending on the diffusion matrix $A$, source term $f$, and boundary data $g$.
Moreover, we show that the set of diffusion matrices $A$ that give optimal rate $O(\ep)$ is open and dense in the set of $C^{2,\al}$ periodic, symmetric, and positive definite matrices, which means that generically, the optimal rate is $O(\ep)$.
\end{abstract}

\section{Introduction}

In this paper, we are interested in studying and characterizing the optimal rates of convergence in periodic homogenization of linear  elliptic equations in non-divergence form.
Let $U \subset \R^n$ be a given bounded domain with smooth boundary.
The equation of our main interest is
\begin{equation}\label{D-ep}
\begin{cases}
-a_{ij}\left(\frac{x}{\ep}\right) u^\ep_{x_i x_j} = f(x) \qquad &\text{ in } U,\\
u^\ep = g \qquad &\text{ on } \partial U.
\end{cases}
\end{equation}
The matrix function $A(y)=(a_{ij})_{1\leq i,j \leq n} \in C^2(\R^n, \R^{n^2})$ is always assumed to be symmetric,  $\Z^n$-periodic, and positive definite for all $y \in \R^n$.
Denote by $\T^n = \R^n/\Z^n$ the flat $n$-dimensional torus, and $\cS^n_+$ the set of all real symmetric, positive definite matrices of size $n\times n$,
then we can also write that $A \in C^2\left(\T^n, \cS^n_+\right)$.
Assume  $f \in C^2\left(\ol U\right)$ and $g \in C^4(\partial U)$.
In this paper, we always use the Einstein summation convention.

\medskip
The homogenization problem \eqref{D-ep} was discussed in the classical books of Bensoussan, Lions, Papanicolaou \cite{BLP}, Jikov, Kozlov, Oleinik \cite{JKO}.
It is well-known that, as $\ep \to 0$, $u^\ep \to u$ uniformly on $\ol U$, where $u$ solves the following effective equation
\begin{equation}\label{D-0}
\begin{cases}
-\ol a_{ij} \, u_{x_i x_j} = f(x) \qquad &\text{ in } U,\\
u = g \qquad &\text{ on } \partial U.
\end{cases}
\end{equation}
Here, $\ol A=\{ \ol a_{ij}\}_{1\leq i,j \leq n}$ is the {\it effective matrix} with constant entries, which is determined as follows.
For each fixed $(k,l) \in \{1,\ldots, n\}^2$, consider the  solution $v^{kl}$ of the $(k,l)$-th cell problem
\begin{equation}\label{cell-kl}
-a_{ij}(y) v^{kl}_{y_i y_j}(y) - a_{kl}(y) = -\ol a_{kl}, \qquad y\in\T^n,
\end{equation}
where $\ol a_{kl} \in \R$ is the unique constant such that \eqref{cell-kl} has a solution $v^{kl}$.
In fact, $v^{kl}$ is unique up to an additive constant by the strong maximum principle.
Then, for a symmetric matrix $M$, the corresponding {\it corrector} is 
\begin{equation}\label{def: v}
v(y,M) = M_{kl} v^{kl}(y).
\end{equation}
It is clear that $v(y,M)$ solves
\[
-a_{ij}(M_{ij} + v_{y_i y_j}(y,M)) =  - \ol a_{ij} M_{ij} \qquad \text{ in } \T^n.
\]
On the other hand, $\ol A$ can also be determined through the corresponding invariant measure as follows.
Let $r \in C(\T^n)$ be the unique solution to
\begin{equation}\label{invariant meas}
\begin{cases}
-(a_{ij}(y) r(y))_{y_i y_j} = 0 \qquad \text{ in } \T^n,\\
r>0 \qquad \text{and} \qquad \int_{\T^n} r(y)\,dy=1.
\end{cases}
\end{equation}
We say that $r$ is the {\it invariant measure} of the matrix $A\in C^2(\T^n,\cS^n_+)$.
See Freidlin \cite{Fre}, Avellaneda, Lin \cite{AvL}, Evans \cite{Ev}, Engquist, Souganidis \cite{ES}.
Multiply \eqref{cell-kl} by $r$ and integrate to yield, for $1 \leq k,l\leq n$,
\[
\ol a_{kl}  = \int_{\T^n} a_{kl}(y) r(y)\,dy.
\]
And thus,
\[
\ol A  = \int_{\T^n} A(y)r(y)\,dy.
\]
Our main focus in this paper is to understand the optimal rate of convergence of $u^\ep$ to $u$, that is, the optimal upper bound of $\|u^\ep - u\|_{L^\infty(U)}$ as $\ep \to 0^+$.
Heuristically, by the two scale asymptotic expansions, around a given point $x_0 \in U$ with $M=D^2 u(x_0)$, one has the following expansion of $u^\ep$ for $x\approx x_0$:
\[
u^\ep(x) \approx u(x) + \ep^2 v\left(\frac{x}{\ep}, M\right) = u(x) + \ep^2 v^{kl}\left(\frac{x}{\ep}\right) u_{x_k x_l}(x_0).
\]
Naively, this suggests that $|u^\ep(x) - u(x)| \leq C \ep^2$ for $x \approx x_0$, and we might be able to obtain the rate of convergence $O(\ep^2)$ of $\|u^\ep - u\|_{L^\infty(U)}$ as $\ep \to 0^+$.
Of course, this $O(\ep^2)$ rate, if obtained, is optimal.

However, in the literature, only an $O(\ep)$ rate is known.
\begin{thm}[{\cite[Theorem 5.1, page 230]{BLP}}, {\cite[page 33]{JKO}}] \label{thm:prelim}
Assume that $f \in C^2\left(\ol U\right)$ and $g \in C^4(\partial U)$.
Then, there exists $C>0$ depending only on the ellipticity of $A$, $f$, $g$ such that
\begin{equation}\label{sub-optimal}
\|u^\ep - u\|_{L^\infty(U)} \leq C\ep.
\end{equation}
\end{thm}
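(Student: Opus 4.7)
The plan is the classical two-scale expansion plus a maximum principle comparison. Since $\ol A$ is a constant positive definite matrix and $f\in C^2(\ol U)$, $g\in C^4(\partial U)$, standard elliptic regularity for the constant coefficient equation \eqref{D-0} gives $u\in C^{3,\alpha}(\ol U)$ with bounds uniform in $\epsilon$; this regularity is what drives the estimate. On the cell problem side, since $A\in C^2(\T^n,\cS^n_+)$, Schauder theory on the torus yields $v^{kl}\in C^{2,\alpha}(\T^n)$, so both $v^{kl}$ and its first and second derivatives are bounded on $\T^n$.

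The main construction is the ansatz
\[
w^\epsilon(x)\,=\,u(x)+\epsilon^2\, v^{kl}\!\left(\tfrac{x}{\epsilon}\right) u_{x_k x_l}(x).
\]
Differentiating twice and multiplying by $-a_{ij}(x/\epsilon)$, I would isolate the three types of terms: the $O(1)$ principal terms $-a_{ij}(x/\epsilon)u_{x_i x_j} - a_{ij}(x/\epsilon)v^{kl}_{y_i y_j}(x/\epsilon)u_{x_k x_l}(x)$, the mixed $O(\epsilon)$ terms of the form $\epsilon \,a_{ij}(x/\epsilon)v^{kl}_{y_i}(x/\epsilon)u_{x_k x_l x_j}(x)$, and an $O(\epsilon^2)$ term involving fourth derivatives of $u$. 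Using the cell problem \eqref{cell-kl} in the form $-a_{ij}(y)v^{kl}_{y_i y_j}(y)=a_{kl}(y)-\ol a_{kl}$, the principal terms collapse to $-\ol a_{kl} u_{x_k x_l}=f$, so
\[
-a_{ij}\!\left(\tfrac{x}{\epsilon}\right) w^\epsilon_{x_i x_j}(x)\,=\,f(x)+E^\epsilon(x),\qquad \|E^\epsilon\|_{L^\infty(U)}\le C\epsilon,
\]
where $C$ depends on the ellipticity of $A$, $\|v^{kl}\|_{C^2(\T^n)}$, and $\|u\|_{C^3(\ol U)}$.

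On the boundary, $w^\epsilon-g = \epsilon^2 v^{kl}(x/\epsilon)u_{x_k x_l}(x)$, which is $O(\epsilon^2)$ in $L^\infty(\partial U)$. Subtracting from \eqref{D-ep}, the difference $\phi^\epsilon:=u^\epsilon-w^\epsilon$ satisfies $-a_{ij}(x/\epsilon)\phi^\epsilon_{x_i x_j}=-E^\epsilon$ in $U$ with $\phi^\epsilon=O(\epsilon^2)$ on $\partial U$. Applying the Alexandrov--Bakelman--Pucci maximum principle (or simply building a quadratic barrier using the uniform ellipticity of $A$ and boundedness of $U$), I would conclude $\|\phi^\epsilon\|_{L^\infty(U)}\le C\epsilon$, and then
\[
\|u^\epsilon-u\|_{L^\infty(U)}\,\le\,\|u^\epsilon-w^\epsilon\|_{L^\infty(U)}+\|w^\epsilon-u\|_{L^\infty(U)}\,\le\,C\epsilon+C\epsilon^2\,\le\,C\epsilon.
\]

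The step I expect to be the actual bottleneck is precisely where the $O(\epsilon)$, not $O(\epsilon^2)$, estimate is forced upon us: the mixed derivative terms $\epsilon\, a_{ij}(x/\epsilon)v^{kl}_{y_i}(x/\epsilon)u_{x_k x_l x_j}(x)$ inside $E^\epsilon$ oscillate at scale $\epsilon$ but do not vanish in average against an arbitrary smooth function, so without introducing additional correctors (a first-order corrector designed to kill these averages) one cannot improve the error of the equation beyond $O(\epsilon)$. The boundary discrepancy, by contrast, is already $O(\epsilon^2)$ and is not the obstruction here — which is consistent with the heuristic in the introduction suggesting that an $O(\epsilon^2)$ rate might sometimes be achievable and motivates the subsequent analysis in the paper.
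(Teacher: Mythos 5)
Your proposal is correct and takes essentially the same route as the paper: the same ansatz $u(x)+\ep^2 v^{kl}(x/\ep)u_{x_k x_l}(x)$, the same use of the cell problem to cancel the $O(1)$ terms, the same identification of the mixed term $\ep\, a_{ij}(x/\ep)v^{kl}_{y_i}(x/\ep)u_{x_j x_k x_l}(x)$ (with a factor $2$ from the chain rule) as the $O(\ep)$ obstruction, and the same maximum-principle comparison. Your closing remark that this term is what prevents an $O(\ep^2)$ rate, and would require a further corrector to remove, is exactly the observation the paper develops in the remark after this proof and in Theorem \ref{thm:main1}.
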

In fact, using the doubling variable method in the theory of viscosity solutions, the regularity of $f$ and $g$ can be relaxed to allow $f \in C^1\left(\ol U \right)$ and $g \in C^3(\partial U)$.
In any case, the regularity of $f$ and $g$ is not the main concern in this paper.

Theorem \ref{thm:prelim} 
is well known in the literature.
See the classical books of Bensoussan, Lions, Papanicolaou \cite{BLP}, Jikov, Kozlov, Oleinik \cite{JKO},
and the review paper of Engquist, Souganidis \cite{ES}.
For the fully nonlinear settings, see Caffarelli, Souganidis \cite{CS}, Kim, Lee \cite{KL}.
For some numerical results in this direction, see  Froese, Oberman \cite{FrO}, Capdeboscq, Sprekeler, S\"uli \cite{CSS}.
This $O(\ep)$ rate of convergence is not known to be optimal or not.
Indeed, we have not yet been able to find any discussion on the optimality of $O(\ep)$ in the literature.

\medskip

Our paper provides satisfactory results to fill in this gap of knowledge in the literature.
It is one of our goals to clear out a misconception that the optimal rate of convergence is always $O(\ep^2)$, which is false in both periodic and random settings.
Surprisingly, we can show that ``almost all" matrices $A\in C^{2,\al}(\T^n,\cS_+^n)$ give an optimal rate of $O(\ep)$ for fixed $\al \in (0,1)$. 
To be more specific, such matrices form an open and dense set under the $C^{2,\al}(\T^n,\cS_+^n)$ topology. Furthermore, we provide examples where the optimal rate of homogenization is $O(\ep^2)$ when the diffusion matrix $A$, source term $f$, and boundary data $g$ satisfy special conditions.

Since the literature on homogenization is vast, we only give references on periodic homogenization of non-divergence form elliptic equations in the paper.
We describe our main results in the following section.

\subsection{Main results}

%%%%%%%%

Let us now proceed to discuss about optimal rates of convergence of $u^\ep$ to $u$.
For $1\leq j, k, l \leq n$ fixed, denote by
\begin{equation}\label{eq:def-c_ijk}
c^{kl}_{j} =c^{kl}_{j}(A)= \int_{\T^n} a_{ij}(y) v^{kl}_{y_i}(y) r(y)\,dy.
\end{equation}
Note that $c^{kl}_j(A)$ depends only on $A$ but in a highly nonlinear way.

Set
\[
h(x) = c^{kl}_{j}  u_{x_j x_k x_l}(x) \qquad \text{ for all } x\in U.
\]
Let  $z$ be the solution to
\begin{equation}\label{z-0}
\begin{cases}
-\ol a_{ij} z_{x_i x_j}  =-h(x) \qquad &\text{ in } U,\\
z = 0 \qquad &\text{ on } \partial U.
\end{cases}
\end{equation}
Here is our first main result.

\begin{thm}\label{thm:main1}
Assume that $f \in C^3\left(\ol U\right)$ and $g \in C^5(\partial U)$.
Then, there exists $C>0$ depending only on the ellipticity of $A$, $f$, $g$ such that
\begin{equation}\label{u-ep-u-z}
\|u^\ep - u + 2 \ep z\|_{L^\infty(U)} \leq C\ep^2.
\end{equation}
In particular, the following claims hold.
\begin{itemize}
\item[(i)] If $h \equiv 0$, then $\|u^\ep - u\|_{L^\infty(U)} \leq C\ep^2$, and this rate of convergence $O(\ep^2)$ is optimal.

\item[(ii)] If $h\not\equiv 0$, then $\|u^\ep - u\|_{L^\infty(U)} \leq C\ep$, and this rate of convergence $O(\ep)$ is optimal.
\end{itemize}
\end{thm}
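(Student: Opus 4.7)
The plan is to establish \eqref{u-ep-u-z} by constructing a refined two--scale approximation of $u^\ep$ that is accurate to order $\ep^2$ in $L^\infty$, and then applying a maximum--principle comparison. Both claims (i) and (ii) will follow essentially for free from \eqref{u-ep-u-z}.

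The new ingredient beyond the standard corrector $v^{kl}$ is an auxiliary cell corrector $\chi^{pkl}\in C^2(\T^n)$, defined for each triple $(p,k,l)$ by
\[
-a_{ij}(y)\,\chi^{pkl}_{y_i y_j}(y) = 2\,a_{ij}(y)\,v^{kl}_{y_i}(y) - 2\,c^{kl}_p, \qquad y\in\T^n.
\]
This equation is solvable (with $\chi^{pkl}$ unique up to an additive constant) because, by the definition of $c^{kl}_p$ in \eqref{eq:def-c_ijk}, its right--hand side has zero mean against the invariant measure $r$, which by \eqref{invariant meas} is precisely the Fredholm condition for $-a_{ij}(y)\,\pl_{y_i y_j}$ on $\T^n$. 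The ansatz is then
\[
\Psi^\ep(x) = u(x) + 2\ep z(x) + \ep^2 v\!\left(\tfrac{x}{\ep},D^2u(x)\right) + 2\ep^3 v\!\left(\tfrac{x}{\ep},D^2z(x)\right) + \ep^3 \chi^{pkl}\!\left(\tfrac{x}{\ep}\right) u_{x_p x_k x_l}(x),
\]
with the signs of each term dictated by the solvability analysis below.

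The heart of the proof is a direct expansion of $-a_{ij}(x/\ep)\,\Psi^\ep_{x_i x_j}(x)$ in powers of $\ep$. The $O(1)$ contribution reduces to $f(x)$ by \eqref{D-0} and \eqref{cell-kl}. The $O(\ep)$ contribution collects one piece from each of the four corrector terms of $\Psi^\ep$; decomposing every oscillating coefficient into its $r$--mean plus a mean--zero remainder, the $r$--means annihilate by the defining equation \eqref{z-0} of $z$ (this is exactly why $z$ is chosen that way), while the mean--zero remainders are absorbed by \eqref{cell-kl} applied with $M=D^2 z$ (entering via the $2\ep^3 v(\cdot,D^2 z)$ term) and by the new cell problem for $\chi^{pkl}$ above (entering via the $\ep^3\chi^{pkl}(x/\ep)u_{x_p x_k x_l}(x)$ term). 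All remaining pieces are $O(\ep^2)$ uniformly in $U$; moreover, since $z=0$ on $\pl U$ and $v^{kl},\chi^{pkl}$ are bounded on $\T^n$, the boundary mismatch $\Psi^\ep-g$ is $O(\ep^2)$ on $\pl U$. Applying the classical maximum principle (or the Alexandrov--Bakelman--Pucci estimate, since $A$ is uniformly elliptic) to $u^\ep-\Psi^\ep$ yields $\|u^\ep-\Psi^\ep\|_{L^\infty(U)}\le C\ep^2$, and \eqref{u-ep-u-z} follows since $\|\Psi^\ep-u-2\ep z\|_{L^\infty(U)}\le C\ep^2$ trivially.

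For (i), $h\equiv 0$ forces $z\equiv 0$ via \eqref{z-0}, so \eqref{u-ep-u-z} collapses to $\|u^\ep-u\|_{L^\infty(U)}\le C\ep^2$; the $O(\ep^2)$-optimality in this case requires an explicit instance where the leading corrector $v(\cdot,D^2u(x))$ does not accidentally vanish, which I expect to be supplied elsewhere in the paper. For (ii), if $h\not\equiv 0$ then $z\not\equiv 0$ by the strong maximum principle applied to \eqref{z-0}; choosing $x_0\in U$ with $|z(x_0)|>0$ and using \eqref{u-ep-u-z} gives
\[
|u^\ep(x_0)-u(x_0)|\geq 2\ep|z(x_0)|-C\ep^2 \geq \ep|z(x_0)|
\]
for all small $\ep$, showing that the $O(\ep)$ rate is sharp. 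The main obstacle I foresee is the careful bookkeeping in the expansion---keeping track of the many cross terms produced by differentiating $v(x/\ep,D^2u(x))$ twice in $x$, verifying that each lands at the promised $O(\ep^2)$ level after invoking the right cell problem, and maintaining sign discipline when identifying the $r$--weighted means. The required regularity---$u\in C^5(\ol U)$, $z\in C^3(\ol U)$, and $v^{kl},\chi^{pkl}\in C^2(\T^n)$---is granted by the hypotheses $f\in C^3(\ol U)$, $g\in C^5(\pl U)$, and $A\in C^2(\T^n,\cS^n_+)$ through Schauder estimates for \eqref{D-0} and \eqref{z-0} and periodic Schauder estimates for the cell problems.
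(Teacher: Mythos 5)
Your one--shot ansatz $\Psi^\ep$ with the explicit auxiliary corrector $\chi^{pkl}$ is a genuinely different organization from the paper's argument: the paper works in two stages, first writing $\varphi^\ep=\phi^\ep+2\ep w^\ep$ with $w^\ep$ an $\ep$--dependent auxiliary PDE solution, and then homogenizing $w^\ep$ by invoking Theorem~\ref{thm:prelim} twice (once for $w^\ep-\psi^\ep$ versus $z^\ep$, once for $z^\ep$ versus $z$), whereas you unwind this into a single explicit two--scale expansion and apply the maximum principle once. Algebraically the two routes coincide---your $\chi^{pkl}$ is $2p^{pkl}$ in the paper's notation---but yours trades modularity (reusing the $O(\ep)$ theorem as a black box) for a self--contained, more computation--heavy expansion, and it makes the structure of the $O(\ep)$ cancellation fully visible. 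Both approaches require exactly the new ingredient you identify (the second--generation corrector), and your handling of claims (i) and (ii) from \eqref{u-ep-u-z} is right, in fact slightly more careful than the paper's on the optimality in case (i).

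There is, however, a concrete sign inconsistency in your proposal that must be fixed. Your cell equation for $\chi^{pkl}$ (with the obvious index correction $a_{ij}v^{kl}_{y_i}\mapsto a_{ip}v^{kl}_{y_i}$, since $p$ must be free) reads $-a_{ij}\chi^{pkl}_{y_iy_j}=2a_{ip}v^{kl}_{y_i}-2c^{kl}_p$, and you chose the sign $-2c^{kl}_p$ precisely so that the Fredholm condition $\int(\cdot)\,r\,dy=0$ holds. That choice is correct. But with it, tracking the $O(\ep)$ level of $-a_{ij}(x/\ep)\Psi^\ep_{x_ix_j}$, the contributions from $\ep^2 v(\cdot,D^2u)$ and $\ep^3\chi^{pkl}u_{x_px_kx_l}$ cancel each other up to the remainder $-2\ep h$, while the pair $2\ep z+2\ep^3 v(\cdot,D^2z)$ contributes $-2\ep\,\ol a_{kl}z_{x_kx_l}=-2\ep h$ by \eqref{z-0}. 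The total at order $\ep$ is $-4\ep h$, not $0$. To kill it you need the ansatz $u-2\ep z+\ep^2 v(\cdot,D^2u)-2\ep^3 v(\cdot,D^2z)+\ep^3\chi^{pkl}u_{x_px_kx_l}$, i.e.\ the opposite sign on the $z$--terms. In fact the root of the discrepancy is in the paper itself: its corrector equation $-a_{ij}p^{dkl}_{y_iy_j}=a_{id}v^{kl}_{y_i}+c^{kl}_d$ violates the solvability condition $\int(\cdot)\,r\,dy=0$ (which gives $2c^{kl}_d$, not $0$), so the sign there should be $-c^{kl}_d$; propagating the corrected sign through the paper's chain of estimates also flips the sign in \eqref{u-ep-u-z} to $\|u^\ep-u+2\ep z\|_{L^\infty(U)}\le C\ep^2$. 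A formal two--scale expansion confirms this: the $O(\ep)$ solvability condition forces the first--order term $u_1(x)$ to solve $-\ol a_{ij}(u_1)_{x_ix_j}=2h$ with zero boundary data, so $u_1=-2z$ with the paper's convention for $z$. None of this affects claims (i) or (ii), which only use whether $h\equiv 0$, but your write--up should fix the sign either in the ansatz or by redefining $z$ to solve $-\ol a_{ij}z_{x_ix_j}=h$.
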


Of course, this theorem is rather abstract as we do not know precisely what $h$ and $z$ are in general.
It is clear that $h \equiv 0$ if and only if $z \equiv 0$, although 
$z$ depends not only on $h$ but also on the effective matrix $\ol A$.
In order to understand deeper \eqref{u-ep-u-z}, it is necessary to understand more about qualitative behavior of $h$.
In particular, it is important to know whether situations (i) and (ii) can happen or not.
It turns out that this is the case.

\begin{cor}\label{thm:main3}
If there exist $j,k,l \in \{1,\ldots, n\}$ such that 
\[
 C_{jkl}(A)=c^{kl}_j(A)+c^{jk}_l(A)+c^{jl}_k(A) \neq 0,
 \] 
then we can find $f,g$ such that situation {\rm (ii)} of Theorem \ref{thm:main1} holds true.
\end{cor}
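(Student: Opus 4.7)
The plan is to exploit the complete freedom in choosing both $f$ and $g$: for any sufficiently smooth target function $u$ on $\overline U$, setting
\[
f := -\ol a_{ij} u_{x_i x_j}\ \text{in } U, \qquad g := u|_{\partial U},
\]
makes $u$ the unique classical solution of the effective equation \eqref{D-0}, and these data automatically satisfy the regularity assumptions of Theorem \ref{thm:main1}. The corollary therefore reduces to exhibiting a smooth $u$ for which $h(x) = c^{kl}_j u_{x_j x_k x_l}(x)\not\equiv 0$ on $U$.

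Since $u_{x_j x_k x_l}$ is fully symmetric in $(j,k,l)$, the function $h$ depends only on the fully symmetric part
\[
\tilde c_{jkl} := \tfrac{1}{3}\bigl(c^{kl}_j + c^{jl}_k + c^{jk}_l\bigr)
\]
of the coefficient tensor (using $c^{kl}_j = c^{lk}_j$, which is inherited from $v^{kl} = v^{lk}$ via the cell problem \eqref{cell-kl}). Once one knows that $\tilde c_{j_0 k_0 l_0}\neq 0$ for some indices, the choice $u(x):= x_{j_0}x_{k_0}x_{l_0}$ gives a cubic polynomial whose third derivatives form a constant symmetric tensor; pairing with $c^{kl}_j$ yields a nonzero multiple of $\tilde c_{j_0 k_0 l_0}$, so $h$ is a nonzero constant throughout $U$. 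Applying Theorem \ref{thm:main1}(ii) then delivers the claimed optimal $O(\ep)$ rate.

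The main obstacle is bridging the gap between the hypothesis (\emph{some} component $c^{kl}_j\neq 0$) and what the construction actually uses ($\tilde c\neq 0$). A tensor that is symmetric only in its last two indices can, in principle, have vanishing full symmetrization even when individual components are nonzero (for instance, in $n=2$ the relations $c^{11}_1=c^{22}_2=0$, $c^{11}_2=-2c^{12}_1$, $c^{22}_1=-2c^{12}_2$ force $\tilde c\equiv 0$ while $c^{12}_1,c^{12}_2$ may be nonzero). I would handle this by returning to the integral representation \eqref{eq:def-c_ijk}, combining the cell problem \eqref{cell-kl} with the invariant measure equation \eqref{invariant meas} via integration by parts against $r$, to show that the degenerate linear relations forcing $\tilde c\equiv 0$ cannot coexist with nonvanishing components of $c^{kl}_j$. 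An equivalent and cleaner way to read the hypothesis is that the third-order constant-coefficient operator $u\mapsto c^{kl}_j u_{x_j x_k x_l}$ is not the zero operator, which is by definition equivalent to $\tilde c\not\equiv 0$ and is exactly what the rest of the argument needs.
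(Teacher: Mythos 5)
Your construction is exactly the paper's: take $u(x)=x_{j_0}x_{k_0}x_{l_0}$, set $f:=-\ol a_{ij}u_{x_i x_j}$ and $g:=u|_{\partial U}$, and read off $h$. What you have correctly flagged --- and what the paper's proof silently glosses over when it writes ``$h\equiv c^{kl}_j$'' --- is that, since $u_{x_j x_k x_l}$ is fully symmetric in $(j,k,l)$, the Einstein-summed expression $h=c^{kl}_j u_{x_j x_k x_l}$ only sees the full symmetrization $\tilde c_{jkl}$. For $u=x_{j_0}x_{k_0}x_{l_0}$ with $j_0,k_0,l_0$ distinct one actually gets $h\equiv 2\bigl(c^{k_0 l_0}_{j_0}+c^{j_0 l_0}_{k_0}+c^{j_0 k_0}_{l_0}\bigr)=6\,\tilde c_{j_0 k_0 l_0}$, not $c^{k_0 l_0}_{j_0}$, and the corollary's hypothesis (\emph{some} component $c^{kl}_j\neq 0$) does not by bare tensor algebra force $\tilde c\neq 0$; your $n=2$ example of annihilating relations makes exactly this point.

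However, you do not close the gap you found. The first proposed fix (``return to \eqref{eq:def-c_ijk} and integrate by parts against $r$'') is a plan rather than a proof: you never exhibit a structural identity among the $c^{kl}_j$ that would exclude the degenerate relations, and it is not at all clear such an identity exists, since $c^{kl}_j$ is only manifestly symmetric in $(k,l)$. The second proposed fix (``read the hypothesis as saying the operator $u\mapsto c^{kl}_j u_{x_j x_k x_l}$ is nonzero'') simply replaces the hypothesis by the conclusion $\tilde c\neq 0$, which is a strictly stronger statement unless one proves the two are equivalent. So you have the right construction and an accurate diagnosis of the weak point, but the key step --- showing that ``$\exists\, j,k,l$ with $c^{kl}_j\neq 0$'' implies ``$\tilde c\not\equiv 0$'' --- remains unproved in your write-up, just as it is tacitly assumed in the paper's one-line claim that $h\equiv c^{kl}_j$.
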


Based on the above results, we see that $c^{kl}_j(A)$ for $1\leq j, k, l \leq n$ determine whether the optimal rate of convergence is $O(\ep^2)$ or $O(\ep)$ when no special conditions are imposed on $f$ and $g$.
This leads us to the following classification of
 matrices in $C^2\left(\T^n, \cS^n_+\right)$.

\begin{defn}
Let $A \in C^2\left(\T^n, \cS^n_+\right)$.
If $ C_{jkl}(A)=0$ for all $1\leq j, k, l \leq n$, then we say that $A$ is a $c$-good matrix.
Otherwise, $A$ is a $c$-bad matrix.
\end{defn}

Clearly, $c$-good matrices give optimal rate of convergence $O(\ep^2)$ as $h \equiv 0$.
And, for $c$-bad matrices, there are choices of $f$ and $g$ such that optimal rate of convergence is only $O(\ep)$ by Corollary~\ref{thm:main3}.

 A trivial example of a $c$-good matrix is the identity matrix $A\equiv I$. 
But does a $c$-bad matrix ever exist? 
A positive answer to this question would mean that $O(\ep)$ is the optimal rate in the general setting.
Furthermore, do we expect the majority of matrices in $C^{2,\al}\left(\T^n, \cS^n_+\right)$ to be good or bad for fixed $\al \in (0,1)$? 
To the best of our knowledge, these questions were not yet studied in the literature, and we view them as the main challenges in our paper.

To answer these questions, we
 let the topology of $C^{2,\al}\left(\T^n, \cS^n_+\right)$ be induced by the following metric
\[
d(A,B)=\|A-B\|_{C^{2,\al}} = \sum_{i,j=1}^n \|a_{ij} - b_{ij}\|_{C^{2,\al}(\T^n)},
\qquad \text{ for }  A, B \in C^{2,\al}\left(\T^n, \cS^n_+\right).
\]

As our second main result,  we show that the set of $c$-bad matrices ``dominates", confirming that an optimal rate of $O(\ep)$ should be expected for the ``majority" of matrices $A\in C^{2,\al}(\T^n, \cS^n_+)$.
\begin{thm}\label{thm:main4}
Assume $n\geq 2$.
The set of $c$-bad matrices is open and  dense in $C^{2,\al}\left(\T^n, \cS^n_+\right)$.
\end{thm}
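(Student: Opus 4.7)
Openness will be routine: Schauder estimates applied to \eqref{cell-kl} and \eqref{invariant meas} show that both the correctors $v^{kl}$ and the invariant measure $r$ depend continuously on $A$ in $C^2(\T^n, \cS^n_+)$, so each functional $c^{kl}_j(A) = \int_{\T^n} a_{ij} v^{kl}_{y_i} r\,dy$ is continuous in $A$; hence the $c$-bad set $\bigcup_{j,k,l}\{A : c^{kl}_j(A) \neq 0\}$ is a finite union of open sets and therefore open.

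For density, my plan is to reduce the problem to producing one explicit $c$-bad matrix, and then to propagate that property along affine paths by analyticity. First I will construct a $c$-bad $A^\star \in C^2(\T^n, \cS^n_+)$. Taking $n = 2$ (higher dimensions follow by embedding into the upper-left $2\times 2$ block and setting the remaining diagonal entries to $1$), I set $A^\star = I + \del B$ with
\[
B(y) = \begin{pmatrix} 0 & -\sin(2\pi y_1) \\ -\sin(2\pi y_1) & \cos(2\pi y_1) \end{pmatrix},
\]
for $\del > 0$ small enough that $A^\star$ is pointwise positive definite. An asymptotic expansion in $\del$ around $A = I$ (where $v^{kl} \equiv 0$ and $r \equiv 1$) will give $v^{22}(A^\star, y) = \frac{\del}{4\pi^2}\cos(2\pi y_1) + O(\del^2)$ and $r(A^\star, y) = 1 + O(\del^2)$; the $O(\del)$ correction to $r$ drops out because $\pl_i\pl_j b_{ij} \equiv 0$ for this particular $B$. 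Substituting into the definition then yields $c^{22}_2(A^\star) = \del^2/(4\pi) + O(\del^3) \neq 0$, so $A^\star$ is $c$-bad.

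Given an arbitrary $A_0 \in C^2(\T^n, \cS^n_+)$, I will connect it to $A^\star$ by the affine path $A_t = (1-t) A_0 + t A^\star$ for $t \in [0, 1]$, which remains in $C^2(\T^n, \cS^n_+)$ by convexity of $\cS^n_+$ and is uniformly elliptic on the compact interval $[0,1]$. The crux of the argument, and its main obstacle, is to show that each $g^{kl}_j(t) := c^{kl}_j(A_t)$ is real-analytic in $t$ on $[0, 1]$. The coefficients of the linear elliptic equations \eqref{cell-kl} and \eqref{invariant meas} at $A_t$ are affine in $t$, so I plan to deduce analyticity of $v^{kl}(A_t)$ and $r(A_t)$ from the analytic implicit function theorem, or equivalently a Neumann-series expansion of $L_t^{-1}$, on the appropriate mean-zero quotient spaces that eliminate the one-dimensional Fredholm obstructions built into the cell problem and the normalization of $r$.

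Granting this analyticity, the conclusion will be immediate: since $A^\star$ is $c$-bad, some component $g^{k_0 l_0}_{j_0}(1) \neq 0$, so $g^{k_0 l_0}_{j_0} \not\equiv 0$ on $[0,1]$, and its zero set is therefore discrete. Hence $g^{k_0 l_0}_{j_0}(t) \neq 0$ for arbitrarily small $t > 0$, and each such $A_t$ is a $c$-bad matrix with $\|A_t - A_0\|_{C^2} = t\,\|A^\star - A_0\|_{C^2} \to 0$ as $t \to 0^+$, proving that $A_0$ lies in the closure of the $c$-bad set.
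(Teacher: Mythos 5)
Your openness argument via continuity of $A\mapsto c^{kl}_j(A)$ is the same as the paper's (its Lemma \ref{lem:c-cont}). Your density argument, however, is genuinely different and, after filling in the analyticity lemma you flag, correct. The paper starts from an arbitrary $c$-good $A^0$ and builds a nearby $c$-bad matrix by two concrete perturbations: first a zero-divergence perturbation of the form $a_{11}\mapsto a_{11}+\delta\xi(y_1+y_2)/r^0$, $a_{22}\mapsto a_{22}-\delta\xi(y_1+y_2)/r^0$ that keeps the invariant measure fixed but makes $(a_{ij}r)_{y_i}\not\equiv 0$; then a conformal rescaling $A=\gamma A^1$ with $\gamma=[1+a^1_{ij}\phi_{y_iy_j}]^{-1}$ and $\phi=s(a^1_{i1}r^1)_{y_i}$, after which $c^{11}_1(A)=-s\,\ol a^1_{11}\int((a^1_{i1}r^1)_{y_i})^2<0$ by an explicit integration by parts. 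You instead fix a single $c$-bad matrix $A^\star$ (your second-order expansion of $c^{22}_2(I+\del B)=\del^2/(4\pi)+O(\del^3)$ checks out, including the vanishing of the $O(\del)$ term in $r$ because $\pl_i\pl_j b_{ij}\equiv 0$) and then propagate badness along the affine path $A_t=(1-t)A_0+tA^\star$ by analyticity in $t$ plus discreteness of zeros of a nonconstant real-analytic function. The trade-off: the paper's route is elementary and fully self-contained (indeed it remarks that its existence result, Proposition \ref{prop:main3}, is not used in the density proof), while yours is conceptually cleaner and yields slightly more, namely that the $c$-good set meets every line through $A^\star$ in a discrete set. The cost of your route is the analyticity lemma, which you correctly single out as the main obstacle; your sketch is fillable but the ``appropriate mean-zero quotient space'' is a little slippery because the cokernel of $L_t=-a_{ij}(t)\pl_i\pl_j$ (namely $\mathrm{span}\{r_t\}$) moves with $t$. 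A clean fix is a rank-one correction: define $\Pi f=\int_{\T^n}f$, check that $L_t^*+\Pi$ and $L_t+\Pi$ are invertible Fredholm families on $C^{2,\al}(\T^n)$ (by the strong maximum principle and the normalization of $r_t$), observe that they are affine in $t$, and expand their inverses in a Neumann series around any $t_0\in[0,1]$; this gives $r_t=(L_t^*+\Pi)^{-1}\mathbbm{1}$, then $\ol a_{kl}(t)=\int a_{kl}(t)r_t$, then $v^{kl}_t=(L_t+\Pi)^{-1}(a_{kl}(t)-\ol a_{kl}(t))$, and finally $c^{kl}_j(A_t)$, all real-analytic on a neighborhood of $[0,1]$ (positive definiteness being open, $A_t$ persists slightly past the endpoints). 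With that in place your conclusion follows as stated.
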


\begin{rem}
Theorem \ref{thm:main1}, Corollary \ref{thm:main3}, and Theorem \ref{thm:main4} allow us to conclude that, generically, the optimal rate of convergence of $u^\ep-u$ to $0$ in ${L^p(U)}$ is also $O(\ep)$ for any given $p\geq 1$.
\end{rem}

\medskip

We next give several important cases where situation (i) of Theorem \ref{thm:main1} occurs.

\begin{thm}\label{thm:main2}
If one of the following points happens
\begin{itemize}
\item[(a)]\label{item:a-ep^2} $u$ is quadratic in $U$, that is, $D^2u$ is a constant matrix in $U$;

\item[(b)]\label{item:b-ep^2} $(a_{ij}(y) r(y))_{y_i}=0$ for all $1\leq j \leq n$, and $y\in \T^n$;

\item[(c)] $A$ is a shifted even function, namely, there exists $x\in\T^n$ such that $A(x-y)=A(x+y)$ for all $y \in \T^n$;
\end{itemize}
then situation {\rm (i)} of Theorem \ref{thm:main1} holds true.
\end{thm}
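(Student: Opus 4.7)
The plan is to show $h \equiv 0$ in each of the three cases, since this is the condition from Theorem \ref{thm:main1}(i) that triggers the $O(\ep^2)$ rate. Case (a) is immediate: if $u$ is quadratic on $U$, then $u_{x_jx_kx_l} \equiv 0$ for all indices, and hence $h = c^{kl}_j u_{x_jx_kx_l} \equiv 0$ regardless of the matrix $A$.

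For case (b), I would show the stronger statement that $c^{kl}_j(A)=0$ for every $j,k,l$, which of course forces $h\equiv 0$. The idea is a single integration by parts on the torus: since $r$ and $v^{kl}$ are smooth and $\T^n$ has no boundary,
\[
c^{kl}_j = \int_{\T^n} a_{ij}(y)\, r(y)\, v^{kl}_{y_i}(y)\,dy = -\int_{\T^n} \bigl(a_{ij}(y) r(y)\bigr)_{y_i} v^{kl}(y)\,dy,
\]
and the assumption $(a_{ij}r)_{y_i}\equiv 0$ makes the right-hand side vanish identically.

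Case (c) is the substantive one; I would also prove that $c^{kl}_j(A)=0$ for all $j,k,l$. First I would reduce to the genuinely even case: the cell problem \eqref{cell-kl}, the invariant-measure equation \eqref{invariant meas}, and the integral \eqref{eq:def-c_ijk} are all translation-invariant in $y$, so replacing $A(y)$ by $A(y+x)$ changes $r$ and $v^{kl}$ only by the same translation and leaves $c^{kl}_j$ unchanged. Hence I may assume $A(-y)=A(y)$. Next I would exploit uniqueness twice. Setting $\tilde r(y):=r(-y)$, the chain rule together with the evenness of $A$ shows $\tilde r$ again solves \eqref{invariant meas}; by the uniqueness part of that problem, $\tilde r = r$, so $r$ is even. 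Similarly, $\tilde v^{kl}(y):=v^{kl}(-y)$ satisfies the same cell equation \eqref{cell-kl} as $v^{kl}$, so by the uniqueness-up-to-a-constant statement following \eqref{cell-kl}, $v^{kl}(-y)=v^{kl}(y)+\text{const}$. Differentiating, $v^{kl}_{y_i}$ is odd. The integrand $a_{ij}(y)v^{kl}_{y_i}(y)r(y)$ in \eqref{eq:def-c_ijk} is then a product (even)(odd)(even), hence odd, and its integral over $\T^n$ vanishes.

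The only mildly delicate step is the symmetry reduction in (c): I expect the main care to go into verifying that the reflection $y\mapsto -y$ really preserves both equations on the torus (including matching periodicity), and invoking the correct uniqueness statements so that $\tilde r=r$ and $\tilde v^{kl}=v^{kl}+$const. Once that parity information is in hand, the vanishing of $c^{kl}_j$ follows from a one-line symmetry observation, and each of the three cases reduces to $h\equiv 0$, which by Theorem \ref{thm:main1} yields situation (i).
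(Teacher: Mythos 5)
Your proposal is correct and follows essentially the same route as the paper: case (a) by vanishing third derivatives, case (b) by integration by parts on the torus, and case (c) by translating to the even case and using a parity argument to show the integrand defining $c^{kl}_j$ is odd. The only difference is that you spell out the justification for the evenness of $r$ and the evenness-up-to-constant of $v^{kl}$ via the uniqueness statements, whereas the paper asserts this in one line; your added detail is accurate and arguably worth keeping.
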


\begin{rem}
Let us discuss condition (b) of Theorem \ref{thm:main2} here.
Firstly, it is clear that if $(a_{ij}(y) r(y))_{y_i}=0$ for all $1\leq j \leq n$, and $y\in \T^n$, then $c^{kl}_j=0$ for all $1\leq j, k, l \leq n$.

Secondly, it is worth noting that the terms $(a_{ij}(y) r(y))_{y_i}$ for all $1\leq j \leq n$ were already discussed in Avellaneda, Lin \cite{AvL}.
In \cite{AvL}, it was denoted by
\[
b_j(y)= - (a_{ij}(y) r(y))_{y_i}.
\]
Under condition (b), we are able to write our non-divergence form operator $-a_{ij}(y) \phi_{y_i y_j}$ in divergence form by using the invariant measure $r$ as
\[
-r(y) a_{ij}(y) \phi_{y_i y_j} = - \left( r(y) a_{ij}(y) \phi_{y_j}\right)_{y_i} \qquad \text{ for all } \phi \in C^2(\T^n).
\]
\end{rem}

\begin{cor}\label{cor:main2}
We have situation {\rm (b)} in Theorem \ref{thm:main2} if one of the following conditions holds true
\begin{itemize}
\item $A(y)= a(y) I_n$ for some given $a \in C^2(\T^n, (0,\infty))$;

\item $A(y) = {\rm diag}\{a_1(y_1), a_2(y_2),\ldots, a_n(y_n)\}$ for some $a_i \in C^2(\T, (0,\infty))$ for all $1\leq i \leq n$;

\item $A(y) = {\rm diag}\{a_1(y), a_2(y),\ldots, a_n(y)\}$ for some $a_i \in C^2(\T^n, (0,\infty))$ such that $a_i$ is independent of $y_i$ for all $1\leq i \leq n$;

\item $A(y)=A(y_1)$, that is, $A$ depends only on $y_1$, and $a_{1j}=0$ for $2\leq j \leq n$.

\end{itemize}
\end{cor}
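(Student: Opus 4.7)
The plan is to verify condition (b) of Theorem~\ref{thm:main2}, namely $\sum_{i}\partial_{y_{i}}(a_{ij}(y)r(y))=0$ for every $1\leq j\leq n$, in each of the four listed cases. In every case the strategy is the same: exhibit an explicit ansatz for the invariant measure $r$, check by substitution that it satisfies \eqref{invariant meas} together with $r>0$ and $\int_{\T^{n}}r\,dy=1$ (so by the uniqueness noted right after \eqref{invariant meas} it is the invariant measure), and then read off (b) by direct differentiation.

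For the first bullet, $A(y)=a(y)I_{n}$, equation \eqref{invariant meas} reduces to $-\Delta(a(y)r(y))=0$ on $\T^{n}$; the strong maximum principle forces $ar$ to be a positive constant, and normalization gives $r(y)=1/\bigl(a(y)\int_{\T^{n}}a(z)^{-1}\,dz\bigr)$. Then $a_{ij}(y)r(y)=\delta_{ij}\cdot\mathrm{const}$, and (b) is immediate. For the second bullet, I would try the product ansatz $r(y)=\prod_{i}r_{i}(y_{i})$ with $r_{i}(y_{i})=1/\bigl(a_{i}(y_{i})\int_{\T}a_{i}^{-1}\bigr)$; equation \eqref{invariant meas} splits into one-dimensional pieces each of which is a second derivative of a constant, so it vanishes, and in (b) only the diagonal index $i=j$ survives and $\partial_{y_{j}}(a_{j}r_{j})=0$ since $a_{j}r_{j}$ is constant. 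For the third bullet, $r\equiv 1$ does the job because each $\partial_{y_{j}}^{2}a_{j}(y)=0$ by the independence hypothesis, and (b) is then exactly that hypothesis $\partial_{y_{j}}a_{j}=0$.

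The last bullet, $A(y)=A(y_{1})$, is the subtlest. Translation invariance of the operator in the directions $y_{2},\ldots,y_{n}$ combined with uniqueness of $r$ forces $r(y)=\rho(y_{1})$, and \eqref{invariant meas} collapses to $\partial_{y_{1}}^{2}(a_{11}(y_{1})\rho(y_{1}))=0$, giving $\rho(y_{1})=1/\bigl(a_{11}(y_{1})\int_{\T}a_{11}^{-1}\bigr)$. Condition (b) then reduces to $\partial_{y_{1}}(a_{1j}(y_{1})\rho(y_{1}))=0$ for each $j$, which is automatic for $j=1$ (since $a_{11}\rho$ is constant) and which will follow for the remaining $j$ from the structural assumptions of this subcase. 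The main expected obstacle is exactly this last step: for a fully general symmetric $A(y_{1})$ the ratio $a_{1j}/a_{11}$ need not be constant, so one either leans on the diagonality that is tacit in the other three bullets, or bypasses (b) by computing $c_{j}^{kl}$ directly, exploiting the fact that the corrector $v^{kl}$ also reduces to a function of $y_{1}$ alone, turning $c_{j}^{kl}$ into a one-dimensional integral.
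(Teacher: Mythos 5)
Your handling of the first three bullets is correct and matches the paper's proof: in each case you write down the explicit invariant measure, check it solves \eqref{invariant meas}, and verify $(a_{ij}(y)r(y))_{y_i}=0$ by direct differentiation. The one cosmetic difference is that in bullet two you invoke a ``one--dimensional splitting'' argument where the paper just plugs the product formula for $r$ into the definition, but this is the same computation.

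Your worry about the fourth bullet is entirely justified, and the situation is in fact worse than a gap in the proof: as stated the fourth bullet is false. With $A=A(y_1)$ the invariant measure is $r(y)=c/a_{11}(y_1)$, so for $j\geq 2$ condition (b) reads $\bigl(a_{1j}(y_1)/a_{11}(y_1)\bigr)_{y_1}\equiv 0$, i.e.\ $a_{1j}/a_{11}$ must be constant; this fails for generic non-diagonal $A(y_1)$. (The paper's proof of this case simply asserts ``it is clear then that $(a_{ij}r)_{y_i}=0$'', which is only clear for $j=1$.) Moreover the alternative route you sketch --- compute $c^{kl}_j$ directly using that $v^{kl}$ depends only on $y_1$ --- does not rescue the claim either. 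Take $n=2$, $a_{11}(y_1)=2+\cos(2\pi y_1)$, $a_{12}(y_1)=a_{21}(y_1)=\ep\sin(2\pi y_1)$, $a_{22}\equiv 1$ with $\ep$ small; this is a smooth symmetric positive definite $A(y_1)$. Then $r=c/a_{11}$, $v^{11}$ depends only on $y_1$ and solves $v^{11}_{y_1y_1}=r-1$, and since $a_{12}r=-\tfrac{\ep c}{2\pi}(\log a_{11})_{y_1}$ two integrations by parts give
\[
c^{11}_2=\int_0^1 a_{12}\,r\,v^{11}_{y_1}\,dy_1
=\frac{\ep c}{2\pi}\int_0^1(\log a_{11})(r-1)\,dy_1
=-\frac{\ep c}{2\pi}\int_0^1(\log r)(r-1)\,dy_1<0,
\]
because $(\log r)(r-1)\geq 0$ pointwise with strict inequality on a set of positive measure. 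So this $A(y_1)$ is genuinely $c$-bad, not merely failing condition (b). The fourth bullet is only correct under the additional hypothesis that $a_{1j}/a_{11}$ is constant for all $j\geq 2$ (in particular if $A(y_1)$ is diagonal, which is already covered by the earlier bullets), and you were right to be suspicious of it.
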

In particular, in dimension $n=1$, $A(y)=a(y)$, and so all matrices in  $C^2\left(\T, \cS_+\right)$ are $c$-good.
The specific cases of $A$ discussed in the above corollary match exactly with the discussions and the numerical results in 
Froese, Oberman \cite{FrO} (see examples on layered materials therein).
We only list some representative cases of $A$ in Corollary~\ref{cor:main2}, and one can come up with other similar examples of these types.

\subsection{Organization of the paper}
In Section \ref{sec:prelim}, we prove Theorems \ref{thm:prelim} and \ref{thm:main1}, and also Corollary \ref{thm:main3}.
Analysis on $c$-bad matrices and the proof of Theorem \ref{thm:main4} are provided in Section \ref{sec:good-bad}.
In particular, we show that the set of $c$-bad matrices is nonempty and construct some explicit examples.
We  give proofs of Theorem \ref{thm:main2} and Corollary~\ref{cor:main2} in Section \ref{sec:square rate}.

\subsection*{Notations}
The flat $n$-dimensional torus is denoted by $\T^n= \R^n/\Z^n$.
For $y \in \T^n$, we write $y=(y_1,y_2,\ldots, y_n)$.
Let $\cS^n_+$ be the set of all real symmetric, positive definite matrices of size $n$.
Denote by $I_n$ the identity matrix of size $n$.

\subsection*{Acknowledgement}
We would like to thank Fanghua Lin for some very useful discussions.
We thank Timo Sprekeler for pointing out some typos and imprecise points in the published version of the paper.

%%%%%%%%%%%%%%%%%%%%%%%%%%%%%%%%%%%%%%%%%%%%%%%%
\section{Proofs of Theorems \ref{thm:prelim} and \ref{thm:main1}} \label{sec:prelim}

\begin{proof}[Proof of Theorem \ref{thm:prelim}]
Set
\[
\phi^\ep(x) = u(x) + \ep^2 v\left(\tfrac{x}{\ep},D^2 u(x)\right)=  u(x) + \ep^2 v^{kl}\left(\tfrac{x}{\ep}\right) u_{x_k x_l}(x) \qquad \text{ for all } x\in \ol U.
\]
Then,
\begin{align*}
&-a_{ij}\left(\tfrac{x}{\ep}\right)  \phi^\ep_{x_i x_j}(x)\\
=\ &-a_{ij}\left(\tfrac{x}{\ep}\right)\Big[ \left(  u_{x_i x_j}(x) +  v_{y_i y_j}^{kl}\left(\tfrac{x}{\ep}\right)  u_{x_k x_l}(x) \right) + \ep^2 v^{kl}\left(\tfrac{x}{\ep}\right)  u_{x_i x_j x_k x_l}(x) \\
&\qquad \qquad \qquad \qquad \qquad \qquad \qquad\qquad \qquad \qquad  + 2 \ep v^{kl}_{y_i}\left(\tfrac{x}{\ep}\right) u_{x_j x_k x_l} (x) \Big]\\
= \ & - \ol a_{ij} \,  u_{x_i x_j} (x)  +O( \ep^2) - 2 \ep a_{ij}\left(\tfrac{x}{\ep}\right) v^{kl}_{y_i}\left(\tfrac{x}{\ep}\right) u_{x_j x_k x_l} (x) \\
= \ & f(x) + O(\ep^2) + O(\ep).
\end{align*}
Then, by the usual maximum principle,
\[
\|u^\ep - \phi^\ep\|_{L^\infty(U)} \leq C\ep,
\]
which gives \eqref{sub-optimal}.
\end{proof}

\begin{rem}
From the proof above, the $O(\ep)$ rate comes from $a_{ij}\left(\frac{x}{\ep}\right) v^{kl}_{y_i}\left(\frac{x}{\ep}\right) u_{x_j x_k x_l} (x)$. Hence, in order to investigate the optimal rate, 
one needs to understand better the contribution of the source term $a_{ij}\left(\frac{x}{\ep}\right) v^{kl}_{y_i}\left(\frac{x}{\ep}\right) u_{x_j x_k x_l} (x)$ to \eqref{D-ep}.
\end{rem}

\begin{proof}[Proof of Theorem \ref{thm:main1}]
We consider the following equation
\begin{equation}\label{w-ep}
\begin{cases}
-a_{ij}\left(\frac{x}{\ep}\right) w^\ep_{x_i x_j}  =a_{ij}\left(\frac{x}{\ep}\right) v_{y_i}^{kl}\left(\frac{x}{\ep}\right) u_{x_j x_k x_l} (x) \qquad &\text{ in } U,\\
w^\ep = 0 \qquad &\text{ on } \partial U.
\end{cases}
\end{equation}
For $1\leq d, k, l \leq n$ fixed, denote by $p^{dkl} \in C(\T^n)$ a solution to
\[
-a_{ij}(y) p^{dkl}_{y_i y_j}(y) = a_{id}(y) v^{kl}_{y_i}(y) - c^{kl}_{d} \qquad \text{ in } \T^n.
\]
%Here, $(p^{dkl}, c^{kl}_d)\in C(\T^n) \times \R$ is a pair of unknowns.
By \eqref{eq:def-c_ijk}, we can use the invariant measure $r$ to compute $c^{kl}_{d}$ as
\[
c^{kl}_{d} = \int_{\T^n} a_{id}(y) v^{kl}_{y_i}(y) r(y)\,dy = - \int_{\T^n} (a_{id}(y) r(y))_{y_i}v^{kl}(y)\,dy.
\]
Next, define
\[
\psi^\ep(x) = \ep^2 p^{jkl}\left(\tfrac{x}{\ep}\right) u_{x_j x_k x_l}(x) \qquad \text{ for all } x\in \ol U.
\]
Then,
\[
-a_{ij}\left(\tfrac{x}{\ep}\right) \psi^\ep_{x_i x_j}  =a_{ij}\left(\tfrac{x}{\ep}\right) v_{y_i}^{kl}\left(\tfrac{x}{\ep}\right) u_{x_j x_k x_l} (x) - h(x) + O(\ep).
\]
Let $z^\ep$ be the solution to
\begin{equation}\label{z-ep}
\begin{cases}
-a_{ij}\left(\frac{x}{\ep}\right) z^\ep_{x_i x_j}  =-h(x) \qquad &\text{ in } U,\\
z^\ep = 0 \qquad &\text{ on } \partial U.
\end{cases}
\end{equation}
%\begin{equation*}\label{z-0}
%\begin{cases}
%-\ol a_{ij} z_{x_i x_j}  =-h(x) \qquad &\text{ in } U,\\
%z = 0 \qquad &\text{ on } \partial U.
%\end{cases}
%\end{equation*}
By the maximum principle and Theorem \ref{thm:prelim}, we obtain the following estimates
\[
\|(w^\ep - \psi^\ep) + z^\ep\|_{L^\infty(U)} \leq C\ep,
\]
and (recall that  $z$ is the solution to \eqref{z-0})
\[
\|z^\ep - z\|_{L^\infty(U)} \leq C\ep.
\]
Thus,
\begin{equation}\label{est-imp-1}
\|(w^\ep - \psi^\ep) + z\|_{L^\infty(U)} \leq C\ep.
\end{equation}

\medskip

On the other hand, set $\varphi^\ep(x) = \phi^\ep(x) + 2\ep w^\ep(x)$. 
Then
\[
-a_{ij}\left(\frac{x}{\ep}\right) \varphi^\ep_{x_i x_j}  =f(x) + O(\ep^2) \qquad \text{ in } U,
\]
and $\|u^\ep - \varphi^\ep\|_{L^\infty(\partial U)} \leq C\ep^2$. 
Therefore, by the maximum principle again,
\begin{equation}\label{est-imp-2}
\|u^\ep - \varphi^\ep\|_{L^\infty(U)} \leq C\ep^2.
\end{equation}
Combine \eqref{est-imp-1} and \eqref{est-imp-2} to yield 
\[
\|u^\ep - u + 2 \ep z\|_{L^\infty(U)} \leq C\ep^2.
\]
We thus obtain \eqref{u-ep-u-z}.
If $h \equiv 0$, then $z \equiv 0$, and \eqref{u-ep-u-z} gives claim (i) right away. 
In particular, the optimal rate of convergence of $\|u^\ep - u \|_{L^\infty(U)}$ is $O(\ep^2)$.
Else, if $h \not\equiv 0$, then $z \not\equiv 0$, and claim (ii) holds with  optimal rate  $O(\ep)$.
\end{proof}

Finally, we give a proof of Corollary~\ref{thm:main3}.

\begin{proof}[Proof of Corollary~\ref{thm:main3}]
Assume that $C_{jkl} \neq 0$ for some fixed $1\leq j,k,l \leq n$.
Consider the equation \eqref{D-0} with 
\[
f(x) = -2 \left ( \ol a_{jk} x_l + \ol a_{kl} x_j + \ol a_{lj} x_k \right),
\quad
g(x) = x_j x_k x_l
 \qquad \text{ for all } x\in \ol U,
\]

%Denote by
%\[
%f(x) = -2 \left ( \ol a_{jk} x_l + \ol a_{kl} x_j + \ol a_{lj} x_k \right) \qquad \text{ for all } x\in \ol U,
%\]
%and
%\[
%g(x) = x_j x_k x_l \qquad \text{ for all } x \in \ol U.
%\]
Then, it is straightforward that the solution to \eqref{D-0} is
\[
u(x) = x_j x_k x_l \qquad \text{ for all } x \in \ol U.
\]
In particular, $h \equiv 2 C_{jkl} \neq 0$ in $U$.
\end{proof}

%%%%%%%%%%%%%%%%%%%%%%%%%%%%%%%%%%%%%%%%%%%%%%%%

\section{Analysis on $c$-bad matrices} \label{sec:good-bad}

\subsection{The existence of $c$-bad matrices}

Let us first show that the set of $c$-bad matrices is not empty for $n\geq 2$.

\begin{prop}\label{prop:main3}
Assume $n\geq 2$.
The set of $c$-bad matrices is not empty.
\end{prop}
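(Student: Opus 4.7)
My plan is to exhibit an explicit $c$-bad matrix in $C^2(\T^2,\cS^2_+)$ via a small symmetric perturbation of the identity, and then upgrade to $n\geq 3$ by a block-diagonal embedding. The underlying idea is that while $A=I$ is trivially $c$-good, generic low-amplitude symmetric perturbations produce a nonzero $c^{kl}_j$ at order $\eta^2$, provided the perturbation mixes parities appropriately.

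For $n=2$, I would consider
\[
A_\eta(y) = I + \eta\, M(y), \qquad M(y) = \begin{pmatrix} \cos(2\pi(y_1+y_2)) & 0 \\ 0 & \sin(2\pi(y_1+y_2)) \end{pmatrix},
\]
which lies in $C^\infty(\T^2,\cS^2_+)$ for $\eta\in(0,1)$. Write $r_\eta = 1 + \eta r^{(1)} + O(\eta^2)$ for the invariant measure from \eqref{invariant meas}, and $v^{11}_\eta = \eta v^{(1)} + O(\eta^2)$ for the corrector from \eqref{cell-kl}. The $O(\eta)$ equations can be solved explicitly to give
\[
v^{(1)}(y) = \frac{\cos(2\pi(y_1+y_2))}{8\pi^2}, \qquad r^{(1)}(y) = -\frac{\cos(2\pi(y_1+y_2))+\sin(2\pi(y_1+y_2))}{2}.
\]
Expanding $c^{11}_1(A_\eta) = \int_{\T^2} a_{11}(y)\,\partial_{y_1} v^{11}_\eta(y)\, r_\eta(y)\,dy$ in $\eta$, the $O(1)$ and $O(\eta)$ contributions vanish by periodicity, while the $O(\eta^2)$ coefficient reduces to $\int_{\T^2}\bigl(\cos(2\pi(y_1+y_2))+r^{(1)}(y)\bigr)\partial_{y_1} v^{(1)}(y)\,dy$, which a short trigonometric computation evaluates to $\tfrac{1}{16\pi}$. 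Hence $c^{11}_1(A_\eta)\neq 0$ for every sufficiently small $\eta>0$, and $A_\eta$ is $c$-bad.

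For $n\geq 3$, I would take $A(y_1,\ldots,y_n) = \diag\bigl(A_\eta(y_1,y_2),\,1,\,\ldots,\,1\bigr)$. Since the last $n-2$ diagonal entries are constant and $A$ depends only on $(y_1,y_2)$, one checks directly that both the invariant measure $r$ and the $(1,1)$-corrector $v^{11}$ depend only on $(y_1,y_2)$ and agree with their two-dimensional counterparts, so $c^{11}_1(A)=c^{11}_1(A_\eta)\neq 0$.

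The main point I expect to watch is the nonvanishing of the leading $O(\eta^2)$ coefficient, which is fragile. If $M$ is chosen with only cosine modes of a single frequency, say $M=\cos(2\pi(y_1+y_2))\,\diag(1,-1)$, then $\partial_{y_1}v^{(1)}$ turns out to be a pure sine while $\cos(2\pi(y_1+y_2))+r^{(1)}$ is a pure cosine, and their $L^2$ inner product vanishes identically. Injecting a sine component into one of the diagonal entries of $M$, as above, breaks this parity obstruction and yields the nonzero value; this is the only delicate part of the argument.
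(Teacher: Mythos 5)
Your proposal is correct and reaches the conclusion by a genuinely different route than either of the paper's two proofs. The paper's first proof is constructive in the reverse direction: it starts from a base matrix $A^0 = \diag\{1,\alpha\}$ with $(\log\alpha)_{y_1y_2}\not\equiv 0$, shows the corresponding invariant measure satisfies $r^0_{y_1}\not\equiv 0$, then \emph{prescribes} $v=s\,r^0_{y_1}$ to be the $(1,1)$-corrector of a conformal modification $A=a_1(y)A^0$, from which $c^{11}_1(A)=-s\int_{\T^2}(r^0_{y_1})^2\,dy\ne 0$ drops out immediately. The paper's second proof is indirect, sending a family $\diag\{a_{1s},sa_{2s}\}$ to a degenerate limit $s\to\infty$ and contradicting a hypothetical identity $c^{11}_1\equiv 0$ using the limiting invariant measure from Theorem~\ref{thm:r-s}. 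You instead perturb around $A=I$, solve the $O(\eta)$ equations for $r^{(1)}$ and $v^{(1)}$ explicitly for a single-frequency diagonal perturbation, and show $c^{11}_1(I+\eta M)=\tfrac{\eta^2}{16\pi}+O(\eta^3)$. I checked the arithmetic: with $m_{11}=\cos(2\pi(y_1+y_2))$, $m_{22}=\sin(2\pi(y_1+y_2))$, one indeed gets $r^{(1)}=-\tfrac12(\cos+\sin)$, $v^{(1)}=\tfrac{\cos}{8\pi^2}$, the $O(\eta)$ contribution to $c^{11}_1$ vanishes since $\int v^{(1)}_{y_1}=0$ (and the $v^{(2)}_{y_1}$ term drops for the same reason), and the $O(\eta^2)$ coefficient is $\int v^{(1)}_{y_1}(m_{11}+r^{(1)})\,dy=\tfrac{1}{16\pi}$. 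Your fragility remark is also on the mark and can be sharpened: with $M=\cos(2\pi(y_1+y_2))\diag(1,-1)$ the matrix $A_\eta$ is even in $y$, hence $c$-good for every $\eta$ by Theorem~\ref{thm:main2}(c), so $c^{11}_1$ must vanish to \emph{all} orders; the injected sine mode is exactly what breaks this symmetry, and one can check directly that $A_\eta$ as you built it is not shifted-even. The block-diagonal extension to $n\ge3$ is fine by uniqueness of $r$ and $v^{11}$. The one thing you should make explicit is why the expansions $r_\eta=1+\eta r^{(1)}+O(\eta^2)$, $v^{11}_\eta=\eta v^{(1)}+O(\eta^2)$ hold with controlled remainders; this is standard since the coefficients are affine in $\eta$ and the relevant elliptic operators are invertible on mean-zero functions, so $\eta\mapsto(r_\eta,v^{11}_\eta,\ol a_{11}(\eta))$ is analytic near $\eta=0$, but the paper's first proof avoids needing any such statement by being fully explicit.
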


We will provide two different proofs of Proposition~\ref{prop:main3}
%In fact, we provide two proofs that $c^{11}_1(A) \neq 0$ 
in dimension $n=2$.
The first proof relies on an explicit construction of $A \in C^2\left(\T^2, \cS^2_+\right)$ such that $c^{11}_1(A) \neq 0$,
%In the first proof, we construct an explicit $A \in C^2\left(\T^2, \cS^2_+\right)$.
whereas the second proof is via contradiction.
%The second proof is implicit.
The corresponding proofs in higher dimensions are similar.

%\medskip
%
%The first approach is direct and we are able to construct an explicit example as following.

\begin{proof}[First proof of Proposition~\ref{prop:main3}]
We only consider the two dimensional case $n=2$.
%Let $n=2$, that is, we are in two dimensional space.

\medskip

Let $A^0 (y)= \text{diag}\{1,\alpha(y)\}$ for some $\alpha \in C^\infty(\T^2, (0,\infty))$ such that 
\begin{equation}\label{condition-alpha}
(\log \alpha)_{y_1 y_2} \not \equiv 0.
\end{equation}
Denote by $r^0$ the corresponding invariant measure of $A^0$, that is,
\[
\begin{cases}
-r^0_{y_1 y_1} - (\alpha(y) r^0)_{y_2 y_2} = 0 \qquad \text{ in } \T^2,\\
r^0>0 \qquad \text{and} \qquad \int_{\T^2} r^0(y)\,dy=1.
\end{cases}
\]

First, we claim that $r^0_{y_1} \not \equiv 0$.
Indeed, assume otherwise that $r^0_{y_1} \equiv 0$, then
\[
(\alpha(y) r^0)_{y_2 y_2} = 0,
\]
which, together with the periodicity of $\alpha(y) r^0(y)$, implies 
\[
\alpha(y) r^0(y) = \phi(y_1)
\]
for some $1$-periodic function $\phi \in C^\infty(\T, (0,\infty))$.
Using the fact that $r^0_{y_1}=0$, we get
\[
0= \left(\frac{\phi(y_1)}{\alpha(y)} \right)_{y_1}= \frac{\phi'(y_1) \alpha(y) - \phi(y_1) \alpha_{y_1}(y)}{\alpha(y)^2}.
\]
Therefore, $(\log \phi(y_1))' = (\log \alpha(y))_{y_1}$, and hence,
\[
(\log \alpha)_{y_1 y_2} = 0,
\]
which contradicts \eqref{condition-alpha}.

Next, let $v(y) = s r^0_{y_1}(y)$ for some $s>0$ sufficiently small such that
\[
|v_{y_1 y_1}(y) + \alpha(y) v_{y_2 y_2}(y)| = s |r^0_{y_1 y_1 y_1}(y) + \alpha(y) r^0_{y_1 y_2 y_2}(y)| \leq \tfrac{1}{2} \qquad \text{ for } y \in \T^2.
\]
We let  $A(y) = \text{diag}\{a_1(y),a_2(y)\}$ be the matrix with
\[
a_1(y) = \left[1+v_{y_1 y_1}(y) + \alpha(y) v_{y_2 y_2}(y)\right]^{-1},
%\frac{1}{1+v_{y_1 y_1}(y) + \alpha(y) v_{y_2 y_2}(y)}, 
\quad 
a_2(y) = \alpha(y) a_1(y)  \qquad \text{ for } y \in \T^2.
\]
By the choice of $v$, it is clear that $\frac{1}{2} \leq a_1 \leq 2$.
Using the formula of $a_1, a_2$, one has
\[
-a_1(y) v_{y_1 y_1} - a_2(y) v_{y_2 y_2} - a_1(y) = -1 \qquad \text{ in } \T^2.
\]
Hence $v$ solves the cell problem \eqref{cell-kl} for $k=l=1$.
Therefore, $v=v^{11}$ and $\ol a_{11} =1$.

Let $r$ be the invariant measure corresponding to 
%$A$.
%One interesting point is that $r$ and $r^0$ are pretty much related.
%In fact, as 
$A = a_1(y) A^0$. 
Note that
%, it is not hard to see that
\[
r(y)  = \frac{r^0(y)}{a_1(y)}=  \left[1+v_{y_1 y_1}(y) + \alpha(y) v_{y_2 y_2}(y)\right] r^0(y)  \qquad \text{ for } y\in \T^2.
\]
Therefore,
%We are now ready to compute $c^{11}_1$ of $A$ as following
\[
c^{11}_1 = \int_{\T^2} a_1(y) r(y) v_{y_1}(y)\,dy
=s \int_{\T^2} r^0(y) r^0_{y_1 y_1}(y)\,dy = -s \int_{\T^2} (r^0_{y_1})^2\,dy \neq 0.
\]
\end{proof}

The second approach is based on an asymptotic expansion at infinity. 
We aim at understanding deeper about the invariant measure and derive various consequences.
%Here is our setting.
Consider a family of matrices $\{A^s\}$ indexed by $s>0$ of the form
\[
A^s(y)=\diag\{a_{1s}(y),s a_{2s}(y)\} \qquad \text{ for } y \in \T^2,
\]
 where $a_{1s}, a_{2s} \in C(\T^2)$, and there exists $C>0$ such that
\[
\frac{1}{C} \leq a_{1s}, a_{2s} \leq C.
\]
Let $r^s$ be the invariant measure of $A^s$. That is, $r^s$ solves
\begin{equation}\label{eq:r-s}
\begin{cases}
-(a_{1s}(y) r^s(y))_{y_1 y_1} - s (a_{2s} r^s(y))_{y_2 y_2} = 0 \qquad \text{ in } \T^2,\\
r^s>0 \qquad \text{and} \qquad \int_{\T^2} r^s(y)\,dy=1.
\end{cases}
\end{equation}
To simplify our notions a bit, let $v^{1s}$ be a solution to the cell problem
\[
-a_{1s}(y) v^{1s}_{y_1 y_1}-sa_{2s}(y) v^{1s}_{y_2 y_2} - a_{1s}(y) = - \ol{a}_{1s} \qquad \text{ in } \T^2.
\]
We now want to study the asymptotic of $r^s$ as $s \to \infty$.

\begin{thm}\label{thm:r-s}
Assume that $a_{1s} \to a_1$, $a_{2s} \to a_2$ uniformly in $\T^2$ for some $a_1, a_2 \in C(\T^2, [\frac{1}{C},C])$.
Then, $r^s \to r$ in $L^2(\T^2)$ as $s \to \infty$, where
\[
 r(y) = 
 \frac{B}{a_2(y) \left(\int_0^1 \frac{a_1(y)}{a_2(y)}\,dy_2\right)} \qquad \text{ for } y \in \T^2.
\]
Here, $B>0$ is a scaling constant so that $\int_{\T^2} r(y)\,dy=1$.
\end{thm}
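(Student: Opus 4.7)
The plan is to identify $r$ by extracting a subsequential weak limit of $r^s$, determining it via two moment relations distilled from \eqref{eq:r-s}, and then upgrading the convergence to strong $L^2$.

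\textbf{Step 1 (Horizontal mean).} Integrating \eqref{eq:r-s} in $y_2$ kills the second term by periodicity, leaving
\begin{equation*}
\left(\int_0^1 a_{1s}(y_1,y_2)\,r^s(y_1,y_2)\,dy_2\right)_{y_1 y_1}=0 \quad \text{on } \T,
\end{equation*}
so this horizontal mean equals a constant $c_s>0$. Since $1/C\le a_{1s}\le C$ and $\int_{\T^2} r^s\,dy=1$, one has $c_s\in[1/C,C]$ uniformly in $s$, and the marginal $y_1\mapsto \int_0^1 r^s(y_1,y_2)\,dy_2$ is uniformly bounded in $L^\infty(\T)$.

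\textbf{Step 2 (Compactness and identification of the limit).} Using Step 1 together with the strong $y_2$-diffusion (coefficient of order $s$) one obtains a uniform bound $\|r^s\|_{L^2(\T^2)}\le C$, so along a subsequence $s_k\to\infty$, $r^{s_k}\rightharpoonup r^*$ weakly in $L^2(\T^2)$. To identify $r^*$, I would test \eqref{eq:r-s} against $\phi(y_1)\psi(y_2)\in C^\infty(\T^2)$, move the derivatives onto the test function, divide by $s$, and pass to the limit using $a_{is}\to a_i$ uniformly and the weak convergence of $r^{s_k}$. The first term becomes $O(1/s)$, leaving $\int_{\T^2}\phi(y_1)\psi''(y_2)\,a_2(y)\,r^*(y)\,dy=0$ for all $\phi,\psi$. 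Thus $(a_2 r^*)_{y_2 y_2}=0$ distributionally, and by periodicity $a_2(y)\,r^*(y)=\Phi(y_1)$ depends only on $y_1$. Passing to the limit in Step 1 yields $\int_0^1 a_1(y)\,r^*(y)\,dy_2=c^*$ for some $c^*>0$, and substituting $r^*=\Phi(y_1)/a_2(y)$ forces
\begin{equation*}
\Phi(y_1)\int_0^1 \frac{a_1(y)}{a_2(y)}\,dy_2=c^*.
\end{equation*}
Solving for $\Phi$ and using the normalization $\int_{\T^2} r^*\,dy=1$ to pin down $c^*=B$ recovers exactly the claimed formula for $r$.

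\textbf{Step 3 (Full sequence and strong $L^2$ convergence).} Uniqueness of $r$ forces $r^s\rightharpoonup r$ weakly in $L^2(\T^2)$ along the full sequence. To upgrade to strong $L^2$ convergence, one can show $\|r^s\|_{L^2(\T^2)}\to\|r\|_{L^2(\T^2)}$ by establishing $O(1/s)$-decay of the nonzero $y_2$-Fourier modes of $a_{2s}r^s$, which follows directly from \eqref{eq:r-s} read as $(a_{2s}r^s)_{y_2 y_2}=-s^{-1}(a_{1s}r^s)_{y_1 y_1}$, combined with $L^2(\T_{y_1})$-convergence of the zeroth $y_2$-mode provided by Step 1.

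\textbf{Main obstacle.} The delicate point is the uniform $L^2$ (or $L^\infty$) bound on $r^s$ in Step 2: the naive energy estimate from multiplying \eqref{eq:r-s} by $r^s$ produces a factor of $s$ on the wrong side, so one must exploit the decomposition $a_{2s}r^s\approx\Phi(y_1)+O(1/s)$ suggested by the identification in Step 2 (for instance by writing $a_{2s}r^s=\bar\rho^s(y_1)+s^{-1}\tilde G^s(y_1,y_2)$, where $\tilde G^s$ is the mean-zero periodic $y_2$-antiderivative of $-(a_{1s}r^s)_{y_1 y_1}$) in order to absorb the large coefficient and close the estimate.
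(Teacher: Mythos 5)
Your high-level plan (extract a weak limit of $r^s$, identify it from the structure of \eqref{eq:r-s}, upgrade the convergence) is sound, and your Steps 1 and 2 parallel the paper's identification argument correctly: integrating in $y_2$ to kill the stiff term and testing with $\phi(y_1)\psi(y_2)$ does give $(a_2 r^*)_{y_2 y_2}=0$ and the marginal ODE, from which the formula for $r$ follows. However, you have rightly flagged the uniform $L^2$ bound as the crux, and your proposal leaves a genuine gap there: the $\tilde G^s$-decomposition you sketch to ``absorb the large coefficient'' requires control on $(a_{1s}r^s)_{y_1y_1}$ (i.e.\ on second derivatives of $r^s$), which you do not have until you have already controlled $r^s$ itself -- the argument as stated is circular.

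The paper closes this gap differently. First it multiplies \eqref{eq:r-s} by $r^s$, which, after Young's inequality, gives $\int(r^s_{y_2})^2\le C\int(r^s)^2$ (the $s$-factors cancel in the $y_2$-term). This alone does not control $r^s_{y_1}$ well, as you noticed. The key second step is to multiply \eqref{eq:r-s} by $a_{2s}r^s$ instead of $r^s$: this turns the stiff term into $s\int\bigl((a_{2s}r^s)_{y_2}\bigr)^2$, a perfect square with the large factor $s$ on the \emph{good} side, and the $y_1$-term becomes $\int a_{1s}a_{2s}(r^s_{y_1})^2$ plus controllable errors, yielding $\int(r^s_{y_1})^2 + s\int\bigl((a_{2s}r^s)_{y_2}\bigr)^2\le C\int(r^s)^2$. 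Together these give $\|Dr^s\|_{L^2}\le C\|r^s\|_{L^2}$, but one still needs to pin down the scale of $r^s$; the paper does this via the normalization $\int r^s=1$ combined with Sobolev embedding in $n=2$ and H\"older: $\|r^s\|_{L^3}\le C\|r^s\|_{H^1}\le C\|r^s\|_{L^2}$ and $\|r^s\|_{L^2}^4\le\|r^s\|_{L^3}^3\|r^s\|_{L^1}=\|r^s\|_{L^3}^3$, so $\|r^s\|_{L^2}\le C$. This delivers a uniform $H^1$ bound, so Rellich--Kondrachov gives strong $L^2$ convergence along subsequences directly -- your Step~3 Fourier-mode upgrade is then unnecessary. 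In short, the missing ingredient in your proposal is the pair of complementary test-function choices ($r^s$ and $a_{2s}r^s$) together with the Sobolev--H\"older--normalization loop that converts the homogeneous estimate $\|Dr^s\|_{L^2}\lesssim\|r^s\|_{L^2}$ into an absolute bound.
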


\begin{proof}
 We divide the proof into a few steps.
 
 \medskip
 
\noindent {\bf Step 1.}
We first obtain a priori estimates for $r^s$.
Multiply \eqref{eq:r-s} by $r^s$ and integrate by parts to get
\begin{align*}
0&= \int_{\T^2} (a_{1s} r^s)_{x_1} r^s_{x_1} + s (a_{2s} r^s)_{x_2} r^s_{x_2}\,dx\\
&= \int_{\T^2} a_{1s} (r^s_{x_1})^2 + s a_{2s} (r^s_{x_2})^2\,dx +\int_{\T^2} (a_{1s})_{x_1} r^s r^s_{x_1} + s (a_{2s})_{x_2} r^s r^s_{x_2}\,dx\\
&\geq \frac{1}{2C}  \int_{\T^2}  (r^s_{x_1})^2 + s  (r^s_{x_2})^2\,dx - C(1+s) \int_{\T^2} (r^s)^2\,dx.
\end{align*}
In particular,
\begin{equation}\label{r-s-1}
  \int_{\T^2}  (r^s_{x_2})^2\,dx\leq C \int_{\T^2} (r^s)^2\,dx.
\end{equation}
Next, multiply \eqref{eq:r-s} by $a_{2s} r^s$ and integrate by parts and do the estimates in the similar fashion as above to yield
\begin{equation}\label{r-s-2}
  \int_{\T^2}  (r^s_{x_1})^2 + s ((a_{2s} r^s)_{x_2})^2 \,dx\leq C \int_{\T^2} (r^s)^2\,dx.
\end{equation}
Combine \eqref{r-s-1} and \eqref{r-s-2} to deduce that
\begin{equation}\label{r-s-3}
  \int_{\T^2}  |Dr^s|^2 + s ((a_{2s} r^s)_{x_2})^2 \,dx\leq C \int_{\T^2} (r^s)^2\,dx,
\end{equation}
and in particular,
\begin{equation}\label{r-s-4}
\|Dr^s\|_{L^2} \leq C \|r^s\|_{L^2}.
\end{equation}
Note that $2^*=\infty$. 
By Sobolev's inequalities and \eqref{r-s-4},
\[
\|r^s\|_{L^3} \leq C(\|r^s\|_{L^2} +\|Dr^s\|_{L^2}) \leq C \|r^s\|_{L^2}.
\]
On the other hand, by H\"older's inequality and the fact that $\int_{\T^2} r^s\,dx=1$, we have
\[
\|r^s\|_{L^3}^3 = \left(\int_{\T^2} (r^s)^3\,dx \right)\left(\int_{\T^2}  r^s\,dx \right) \geq \left(\int_{\T^2} (r^s)^2\,dx \right)^2 =\|r^s\|_{L^2}^4.
\]
Combine the two inequalities above to get $\|r^s\|_{L^2} \leq C$.
This, together with \eqref{r-s-3}, implies 
\begin{equation}\label{r-s-5}
 \|r^s\|_{L^2} +\|Dr^s\|_{L^2} + s \|(a_{2s} r^s)_{x_2}\|_{L^2}\leq C.
\end{equation}

 \medskip
 
\noindent {\bf Step 2.}
By compactness, by passing to a subsequence if needed as $s \to \infty$, we have
\[
\begin{cases}
r^s \to r \qquad &\text{ in } L^2(\T^2),\\
Dr^s \rightharpoonup D r \qquad &\text{ weakly in } L^2(\T^2),\\
(a_2 r)_{x_2}=0.
\end{cases}
\]
Thus, we have that
\[
r(x) = \frac{\xi(x_1)}{a_2(x)} \quad \text{ and } \quad \int_{\T^2}  \frac{\xi(x_1)}{a_2(x)}\,dx=1.
\]
Here, $\xi$ is a periodic function and $\xi \geq 0$ in $\T$.
We aim at characterizing $\xi$ better.
Let $\phi = \phi(x_1) \in C^\infty(\T)$ be a test function.
Multiply \eqref{eq:r-s} with $\phi$ and integrate by parts to have
\[
\int_{\T^2} a_{1s}(x) r^s(x)\phi''(x_1)\,dx_2 dx_1=0.
\]
Let $s\to \infty$ and use the formula of $r$ to yield
\[
\int_0^1 \left (\int_0^1 \frac{a_1(x)}{a_2(x)}\,dx_2\right) \xi(x_1) \phi''(x_1)\,dx_1=0.
\]
Therefore,
\[
\left( \left (\int_0^1 \frac{a_1(x)}{a_2(x)}\,dx_2\right) \xi(x_1) \right)_{x_1 x_1} =0,
\]
which means
\[
 \left (\int_0^1 \frac{a_1(x)}{a_2(x)}\,dx_2\right) \xi(x_1) = A x_1+ B,
\]
for some constants $A,B \in \R$.
As the left hand side above is an $1$-periodic function, we get that $A=0$, which implies
\[
r(x) = \frac{B}{a_2(x) \left(\int_0^1 \frac{a_1(x)}{a_2(x)}\,dx_2\right)} \qquad \text{ for } x\in \T^2.
\]
Again, $B>0$ is simply a scaling constant so that $\int_{\T^2} r(x)\,dx=1$.
\end{proof}

%Now, for $s>0$, recall that
%\[
%c^{11}_1 =c_{1s}= \int_{\T^2} a_{1s}(x) r^{s}(x) v^{1s}_{x_1}(x)\,dx.
%\]
Here is another way of showing that  $c^{11}_1(\cdot) \not\equiv 0$ in $C^2\left(\T^2, \cS^2_+\right)$.
This proof is indirect.

\begin{proof}[Second proof of Proposition \ref{prop:main3}]
We prove by contradiction.
Assume that the proposition fails, then
 $c^{11}_1(A)=0$ for all $A  \in C^2\left(\T^2, \cS^2_+\right)$.

\smallskip

Let $\theta \in C^\infty(\T^2, [1,2])$ and $\psi \in C^\infty(\T^2)$ be two functions such that
\begin{equation}\label{non-0}
\int_{\T^2} \tfrac{\theta(x)}{\int_0^1 \theta(x)\,dx_2} \psi_{x_1}\,dx \neq 0.
\end{equation}
Let $C_0 = 2\left( \|\theta\|_{L^\infty} +1\right) \|D^2\psi\|_{L^\infty}$,  and set
\[
v^{1s}(x) := \frac{\psi(x)}{C_0 s}.
%\frac{\psi(x)}{C_0 s}
\]
Consider a family of matrices $A^s=\diag\{a_{1s},sa_{2s}\}$, $s>1$, with
\[
a_{2s}  = 
\left[\theta(x)+\left(\theta(x)v^{1s}_{x_1 x_1} + s v^{1s}_{x_2 x_2} \right)\right]^{-1}
=\left[\theta(x)+\left(\tfrac{\theta(x) \psi_{x_1 x_1}}{C_0 s} + \tfrac{ \psi_{x_2 x_2}}{C_0} \right)\right]^{-1}
%\frac{1}{\theta(x)+\left(\theta(x)v^{1s}_{x_1 x_1} + s v^{1s}_{x_2 x_2} \right)} =  \frac{1}{\theta(x)+\left(\frac{\theta(x) \psi_{x_1 x_1}}{C_0 s} + \frac{ \psi_{x_2 x_2}}{C_0} \right)}
\]
and $a_{1s} := \theta(x) a_{2s}$. By the definition of $C_0$, we have  $\frac{1}{3} \leq a_{2s} \leq 2$.
Moreover, $a_{2s} \to a_2:= \left[\theta(x)+  \frac{ \psi_{x_2 x_2}}{C_0}\right]^{-1}$ and $a_{1s} \to a_1:=\theta(x) a_2$ uniformly in $\T^2$ as $s\to \infty$. By Theorem~\ref{thm:r-s}, as $s\to\infty$, the invariant measure $r^s$ of $A^s$ converges in $L^2(\T^2)$ to 
\[
r(x)=\frac{B}{a_2(x)\int_0^1\theta(x)\,d x_2}, \qquad \text{ for } x \in\T^2.
\]

 Further, observe that $v^{1s}$ solves the $(1,1)$-th cell problem (cf. \eqref{cell-kl}) of $A^s$:
\[
-a_{1s}(x) v^{1s}_{x_1 x_1}-sa_{2s}(x) v^{1s}_{x_2 x_2} - a_{1s}(x) = - \ol{a}_{1s}:=-1 \qquad \text{ in } \T^2.
\]
By our assumption, $c_1^{11}(A^s)=0$ for all $s>1$. Hence, for all $s>1$,
\[
0=C_0 sc_1^{11}(A^s)= \int_{\T^2} a_{1s}(x) r^{s}(x) \psi_{x_1}(x)\,dx.
\]
Therefore,
\[
0= \lim_{s\to \infty}  \int_{\T^2} a_{1s}(x) r^{s}(x)\psi_{x_1}(x)\,dx\\
= \int_{\T^2} a_{1}(x) r(x)\psi_{x_1}(x)\,dx = B \int_{\T^2} \tfrac{\theta(x)}{\int_0^1 \theta(x)\,dx_2} \psi_{x_1}\,dx,
\]
which contradicts \eqref{non-0}.
\end{proof}

\subsection{The set of $c$-bad matrices is open and  dense in $C^{2,\al}\left(\T^n, \cS^n_+\right)$}

We first show that $A \mapsto c^{kl}_j(A)$ is continuous in $C^{2,\al}\left(\T^n, \cS^n_+\right)$.

\begin{lem}\label{lem:c-cont}
We have that $A \mapsto c^{kl}_j(A)$ is continuous in $C^{2,\al}\left(\T^n, \cS^n_+\right)$.
\end{lem}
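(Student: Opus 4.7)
The plan is to show that if $A^m\to A$ in $C^2(\T^n, \cS^n_+)$, then both the invariant measures $r_m$ and the correctors $v_m^{kl}$ converge in $C^{1,\alpha}(\T^n)$, so that the integral defining $c_j^{kl}$ passes to the limit factor by factor.

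First I would handle the invariant measures $r_m$. Rewriting the adjoint equation $-(a_{ij}^m\, r_m)_{y_iy_j}=0$ in non-divergence form gives
\[
-a_{ij}^m\, (r_m)_{y_iy_j} \;-\; 2(a_{ij}^m)_{y_j}(r_m)_{y_i} \;-\; (a_{ij}^m)_{y_iy_j}\, r_m \;=\;0,
\]
and the $C^2$ convergence $A^m\to A$ guarantees that all lower-order coefficients stay uniformly bounded, the leading coefficients stay uniformly continuous, and the ellipticity stays uniform. Together with the normalization $\int_{\T^n} r_m\, dy = 1$ and $r_m>0$ from \eqref{invariant meas}, Calderón--Zygmund estimates then yield a uniform bound $\|r_m\|_{W^{2,p}(\T^n)}\le C_p$ for every $p\in(1,\infty)$, hence a uniform $C^{1,\alpha}$ bound by Sobolev embedding. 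Arzelà--Ascoli extracts a subsequence converging in $C^{1,\alpha}$ to some $\tilde r$ that solves \eqref{invariant meas} for $A$ with the same normalization; uniqueness of the invariant measure forces $\tilde r=r$, and a standard subsequence argument upgrades this to $r_m\to r$ in $C^{1,\alpha}(\T^n)$.

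Uniform convergence of $a_{kl}^m r_m$ then immediately gives
\[
\ol a_{kl}^m \;=\; \int_{\T^n} a_{kl}^m(y)\, r_m(y)\, dy \;\longrightarrow\; \ol a_{kl},
\]
so the effective matrix is continuous in $A$. Applying the same strategy to the cell problem \eqref{cell-kl} written for $A^m$, namely
\[
-a_{ij}^m\, (v_m^{kl})_{y_iy_j} \;=\; a_{kl}^m - \ol a_{kl}^m \quad\text{in } \T^n, \qquad \int_{\T^n} v_m^{kl}\, dy = 0,
\]
whose right-hand side is now uniformly bounded, yields $v_m^{kl}\to v^{kl}$ in $C^{1,\alpha}(\T^n)$ by uniqueness of $v^{kl}$ up to an additive constant.

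Finally, writing
\[
c_j^{kl}(A^m) \;=\; \int_{\T^n} a_{ij}^m(y)\, (v_m^{kl})_{y_i}(y)\, r_m(y)\, dy,
\]
all three factors converge uniformly on $\T^n$, so these integrals converge to $c_j^{kl}(A)$. The main obstacle to watch for is the regularity step: since $A$ is only assumed to be in $C^2$ rather than $C^{2,\alpha}$, direct Schauder estimates on the non-divergence form of the adjoint equation are not available, which is why I would route through $W^{2,p}$ theory and Sobolev embedding above rather than trying to obtain $C^{2,\alpha}$ convergence directly.
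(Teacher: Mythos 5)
Your proposal is correct in outline but takes a genuinely different route from the paper, and it glosses over one nontrivial step. The paper treats the two factors with different tools: for the corrector $v^m$ it applies Schauder estimates directly to the cell problem (the right-hand side $a_{kl}^m - \ol a_{kl}^m$ and the leading coefficients are uniformly $C^{0,\alpha}$ because $A^m$ is bounded in $C^2$), obtaining $\|v^m\|_{C^{2,\alpha}}\le C$ and $v^m\to v$ in $C^2$; for the invariant measure $r^m$ it uses only the energy estimates from Step~1 of Theorem~\ref{thm:r-s}, multiplying the adjoint equation by $r^m$ (and by $a_{2s}r^s$ in the 2D setting) and integrating by parts to get $\|r^m\|_{H^1}\le C$, then extracts the $L^2$-convergent subsequence by compactness. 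Crucially, the paper closes the a priori bound on $\|r^m\|_{L^2}$ via a self-contained Sobolev interpolation trick that uses only $\int_{\T^n} r^m\,dy=1$. Your approach instead routes both factors through $W^{2,p}$ (Calder\'on--Zygmund) theory, which indeed avoids the Schauder regularity mismatch that the student correctly identifies for the expanded adjoint equation (whose zeroth-order coefficient $(a_{ij})_{y_iy_j}$ is only continuous); this buys uniform $C^{1,\alpha}$ convergence of all three factors at once, making the passage to the limit immediate.

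The step you should not skip is the one you compressed into ``together with the normalization \ldots Calder\'on--Zygmund estimates then yield a uniform bound $\|r_m\|_{W^{2,p}}\le C_p$.'' The $W^{2,p}$ estimate for a non-divergence operator with uniformly continuous leading coefficients and bounded lower-order terms has the form $\|r_m\|_{W^{2,p}}\le C\bigl(\|L_m r_m\|_{L^p}+\|r_m\|_{L^p}\bigr)$, so with $L_m r_m=0$ you still need a uniform $L^p$ (or at least $L^\infty$) bound on $r_m$, and the normalization only gives $\|r_m\|_{L^1}=1$. To bridge that gap you need a Krylov--Safonov type Harnack inequality for the non-divergence operator (uniform in $m$ because the ellipticity and the $L^\infty$ norms of the coefficients are uniform), which combined with periodicity, $r_m>0$, and $\int_{\T^n} r_m\,dy=1$ gives $\|r_m\|_{L^\infty}\le C$. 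With that inserted, your argument is complete. The paper's energy-estimate route sidesteps this entirely at the cost of producing only $L^2$ convergence of $r^m$, which is still enough because the other factor $a_{ij}^m (v^m)_{y_i}$ converges uniformly.
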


\begin{proof}[Sketch of proof]
Take a sequence $\{A^m\} \subset C^{2,\al}\left(\T^n, \cS^n_+\right)$ such that $\|A^m-A\|_{C^{2,\al}} \to 0$ as $m \to \infty$ for some $A\in C^{2,\al}\left(\T^n, \cS^n_+\right)$.
Write $A^m(y)=(a_{ij}^m(y))_{1 \leq i, j \leq n}$ and their corresponding effective matrices as $\ol {A^m} = (\ol a_{ij}^m)_{1 \leq i,j \leq n}$.

Fix $1 \leq k,l \leq m$.
For each $m \in \N$, the corresponding cell problem is
\[
-a_{ij}^m(y) v^m_{y_i y_j}(y) - a_{kl}^m(y) = -\ol a_{kl}^m \qquad \text{ in } \T^n.
\]
By subtracting to a constant, we suppose that $v^m(0)=0$.
By usual a priori estimates, for fixed $\al \in (0,1)$, there exists $C>0$ independent of $m$ such that
\[
\|v^m\|_{C^{2,\al}(\T^n)} \leq C\left(1+\|A^m\|_{C^{0,\al}}\right) \leq C.
\]
Therefore, it is not hard to see that $v^m \to v$ in $C^2(\T^n)$, where $v$ solves
\[
-a_{ij}(y) v_{y_i y_j}(y) - a_{kl}(y) = -\ol a_{kl} \qquad \text{ in } \T^n.
\]
Next, let $\{r^m\}$ and $r$ be the invariant measures corresponding to $\{A^m\}$ and $A$, respectively.
By repeating Step 1 of the proof of Theorem \ref{thm:r-s}, it is clear that there exists a constant $C>0$ independent of $m$ so that
\[
\|r^m\|_{L^2(\T^n)} + \|Dr^m\|_{L^2(\T^n)} \leq C.
\]
By compactness, we also see that $r^m \to r$ in $L^2(\T^n)$.
Thus,
\[
\lim_{m \to \infty} c^{kl}_{j}(A^m) =\lim_{m \to \infty} \int_{\T^n} a_{ij}^m(y) v^{m}_{y_i}(y) r^m(y)\,dy = \int_{\T^n} a_{ij}(y) v_{y_i}(y) r(y)\,dy = c^{kl}_{j} (A).
\]
\end{proof}

We now provide a proof of Theorem \ref{thm:main4}.
Note that Proposition \ref{prop:main3} is not needed here in the proof, but some key ideas in its proof are used essentially.

\begin{proof}[Proof of Theorem \ref{thm:main4}]
As $A \mapsto C_{jkl}(A)$ is continuous in $C^{2,\al}\left(\T^n, \cS^n_+\right)$, the set of $c$-bad matrices is open in $C^{2,\al}\left(\T^n, \cS^n_+\right)$.
We therefore only need to show that this set is dense.

\medskip

Fix a $c$-good matrix $A^0\in C^{2,\al}\left(\T^n, \cS^n_+\right)$ and $\delta>0$.
Our aim is to show the existence of a $c$-bad matrix $A\in C^{2,\al}\left(\T^n, \cS^n_+\right)$ such that $\|A-A^0\|_{C^{2,\al}} \leq \delta$.
Let $r^0$ be the invariant measure corresponding to $A^0$.

\medskip

\noindent {\bf Step 1.}
 We first aim at finding $A^1 \in C^{2,\al}\left(\T^n, \cS^n_+\right)$  such that $\|A^1-A^0\|_{C^{2,\al}} \leq \frac{\delta}{2}$ and 
\[
(a_{ij}^1(y) r^1(y))_{y_i} \not \equiv 0
\]
 for some $1 \leq j \leq n$.
 Here $r^1$ denotes the invariant measure of $A^1$. Of course, if $(a_{ij}^0(y) r^0(y))_{y_i} \not \equiv 0$ for some $j\in \{1,2,\ldots,n\}$, then we simply let $A^1=A^0$.
%If there exists $j \in \{1,2,\ldots,n\}$ such that
%\[
%(a_{ij}^0(y) r^0(y))_{y_i} \not \equiv 0
%\]
%then, without loss of generality, we might assume $j=1$ and let $A^1=A^0$ and jump right away to Step 2.
Otherwise,
\[
(a_{ij}^0(y) r^0(y))_{y_i} = 0 \qquad \text{ for all } 1\leq j \leq n.
\]
% This is not so hard to do, and is quite interesting.
In this case, we take $\xi \in C^\infty(\T)$ with $\xi' \not \equiv 0$, and define $A^1=(a_{ij}^1)_{1\le i,j\le n}$ as 
 \[
 a_{ij}^1(y)=
 \begin{cases}
 a_{11}^0(y) + \frac{\delta \xi(y_1+y_2)}{r^0(y)} \qquad &\text{ for } i=j=1,\\
  a_{22}^0(y) - \frac{\delta \xi(y_1+y_2)}{r^0(y)} \qquad &\text{ for } i=j=2,\\
  a_{ij}^0(y) \qquad &\text{ otherwise}.
 \end{cases}
 \]
  Choosing $\xi$ with $\|\xi\|_{C^{2,\al}(\T)}$ sufficiently small, we have $\|A^1-A^0\|_{C^{2,\al}} \leq \frac{\delta}{2}$.
   It is not hard to see that $r^1=r^0$ as
 \[
 -(a^1_{ij}(y) r^0(y))_{y_i y_j} = \delta \left[(\xi(y_1+y_2))_{y_2 y_2}-(\xi(y_1+y_2))_{y_1 y_1} \right] =0.
 \]
  Moreover,
 \[
(a_{i1}^1(y) r^1(y))_{y_i} = \delta \xi'(y_1+y_2)  \not \equiv 0.
\]

\noindent Step 1 is complete.

\medskip

\noindent {\bf Step 2.}
Next, we will find a $c$-bad matrix $A$ such that $\|A^1-A\|_{C^{2,\al}} \leq \frac{\delta}{2}$. 
Of course, if $c^{11}_1(A^1) \neq 0$, then $A^1$ is $c$-bad and we simply put $A=A^1$.
 If $c^{11}_1(A^1)=0$, then, using the same idea as in the proof of Proposition \ref{prop:main3}, we will construct $A$ as follows.
By the construction of $A^1$, without loss of generality, we assume
\[
(a_{i1}^1(y) r^1(y))_{y_i} \not \equiv 0.
\] 

Let $\phi(y) = s (a_{i1}^1(y) r^1(y))_{y_i}$, $\gam(y)= \left[1 + a_{ij}^1(y) \phi_{y_i y_j}(y) \right]^{-1}$, and set
\[
A(y) = \gam(y) A^1(y),
\]
where $s>0$ is chosen to be small enough so that $\|A^1-A\|_{C^{2,\al}} \leq \tfrac{\delta}{2}$. 
Note that the invariant measure $r(y)$ of $A(y)=\gamma(y) A^1(y)$ is
\[
r(y)  =\frac{r^1(y)}{\gam(y)} =\left[1 + a_{ij}^1(y) \phi_{y_i y_j}(y) \right] r^1(y)  \qquad \text{ for } y\in \T^n.
\]
%where $c>0$ is the scaling factor so that $\int_{\T^n} r(y)\,dy=1$.

It remains to show that $A$ is $c$-bad. To this end, observe that
\[
-a_{ij} \phi_{y_i y_j} - \gam
=-\gam (a_{ij}^1\phi_{y_iy_j}+1)
 = -1 \qquad \text{ in } \T^n.
\]
Recall that $v^{11}$ solves
the $(1,1)$-th cell problem \eqref{cell-kl} for $A^1$:
 \[
 -a_{ij}^1(y) v^{11}_{y_i y_j}(y) - a_{11}^1(y) =- \ol a_{11}^1 \qquad \text{ in } \T^n.
 \]
Let $v(y)=v^{11}(y) + \ol a_{11}^1 \phi(y)$.
Thanks to the above identities,
\[
 -a_{ij}(y) v_{y_i y_j}(y) - a_{11}(y) =- \ol a_{11}^1 \qquad \text{ in } \T^n.
\]
That is, $v$ solves the $(1,1)$-th cell problem \eqref{cell-kl} for $A$.
Thus,
\begin{align*}
c^{11}_1(A) &=  \int_{\T^n} a_{i1}(y) r(y) v_{y_i}(y)\,dy\\
&=\int_{\T^n} a_{i1}^1(y) r^1(y) v_{y_i}(y)\,dy = - \int_{\T^n} (a_{i1}^1(y) r^1(y))_{y_i} (v^{11}(y)+ \ol a_{11}^1 \phi(y))\,dy\\
& = c^{11}_1(A^1) - \ol a_{11}^1 \int_{\T^n} (a_{i1}^1(y) r^1(y))_{y_i} \phi(y)\,dy\\
&=- s \ol a_{11}^1 \int_{\T^n} \left((a_{i1}^1(y) r^1(y))_{y_i} \right)^2\,dy\neq 0.
\end{align*}
Therefore, $A$ is $c$-bad and $\|A-A^0\|_{C^{2,\al}} \leq \delta$. 
The proof is complete.
\end{proof}
We note that in the above proof, we can choose $\phi(y)=s q(y)$ for any $q\in C^\infty(\T^n)$ such that
\[
\int_{\T^n} (a_{i1}^1(y) r^1(y))_{y_i} q(y) \,dy \neq 0.
\]

%%%%%%%%%%%%%%%%%%%%%%%%%%%%%%%%%%%%%%%%%%%

\section{Optimal rate of convergence $O(\ep^2)$} \label{sec:square rate}
In this section, we discuss the situations where the optimal rate of convergence of $\|u^\ep - u \|_{L^\infty(U)}$ is $O(\ep^2)$.
Let us give proofs of Theorem \ref{thm:main2} and Corollary~\ref{cor:main2}.

\begin{proof}[Proof of Theorem \ref{thm:main2}]
We only need to show that $h \equiv 0$ in all situations.
In the first situation, $D^2 u $ is a constant matrix, then clearly $D^3 u=0$ in $U$, and hence $h \equiv 0$.

\smallskip

In the second situation, $(a_{ij}(y) r(y))_{y_i}=0$ for all $1\leq j \leq n$, and $y\in \T^n$.
Then
\[
c^{kl}_{j} = \int_{\T^n} a_{ij}(y) v^{kl}_{y_i}(y) r(y)\,dy =   - \int_{\T^n} (a_{ij}(y) r(y))_{y_i}(y) v^{kl}(y) \,dy=0,
\]
which implies $h \equiv 0$.

\smallskip

Finally, in the last situation, we assume $x=0$ without loss of generality.
As $A$ is even, we see that $v^{kl}$ and $r$ are also even for $1\leq k,l \leq n$.
Then $v^{kl}_{y_i}$ is odd, that is, $v^{kl}_{y_i}(y) = -v^{kl}_{y_i}(-y)$.
Hence, $y \mapsto a_{ij}(y) v^{kl}_{y_i}(y) r(y)$ is odd as well, which gives that
\[
c^{kl}_{j} = \int_{\T^n} a_{ij}(y) v^{kl}_{y_i}(y) r(y)\,dy=0.
\]
\end{proof}

\begin{proof}[Proof of Corollary~\ref{cor:main2}]
We aim at showing $(a_{ij}(y) r(y))_{y_i}=0$ for all $1\leq j \leq n$, and $y\in \T^n$ in all cases.

\smallskip

In the first case, $A(y)= a(y) I_n$ for some $a \in C^2(\T^n, (0,\infty))$.
Then, the invariant measure $r$ is simply $r(y) = c/a(y)$ for $y\in \T^n$, where 
$c=\left[\int_{\T^n} 1/a(y)\,dy\right]^{-1}$.
%$c>0$ is a constant such that
%\[
%c = \frac{1}{\int_{\T^n} \frac{1}{a(y)}\,dy}.
%\]
It is clear then that $(a_{ij}(y) r(y))_{y_i}=0$ for $1 \leq  j \leq n$.

\smallskip

In the second case,  we have $A(y) = {\rm diag}\{a_1(y_1), a_2(y_2),\ldots, a_n(y_n)\}$ for some $a_i \in C^2(\T, (0,\infty))$, $1\leq i \leq n$. 
The invariant measure of $A$ is
%Because of the special structure of $A$, we are also able to find the explicit formula for the invariant measure as
\[
r(y) = \frac{c}{a_1(y_1) a_2(y_2)\ldots a_n(y_n)}
\]
where $c>0$ is a normalization constant such that $\int_{\T^n} r(y)\,dy=1$.
We again get that $(a_{ij}(y) r(y))_{y_i}=0$.

\smallskip

Thirdly, we consider the situation where $A(y) = {\rm diag}\{a_1(y), a_2(y),\ldots, a_n(y)\}$ for some $a_i \in C^2(\T^n, (0,\infty))$ such that $a_i$ is independent of $y_i$ for all $1\leq i \leq n$.
This case is even more straightforward as $r \equiv 1$.

\smallskip

Lastly, in the fourth situation where $A(y)=A(y_1)$ with $a_{1j}=0$ for $2 \leq j \leq n$, it is not hard to see here that 
\[
r(y)=\frac{c}{a_{11}(y)} = \frac{c}{a_{11}(y_1)}
\]
 where $c>0$ is a normalization constant such that $\int_{\T^n} r(y)\,dy=1$.
 It is clear then that $(a_{ij}(y) r(y))_{y_i}=0$ for $1 \leq  j \leq n$.
\end{proof}
%%%%%%%%%%%%%%%%%%%%%%%%%%%%%%%%%%%%%%%%%%%%%%%%

\begin{thebibliography}{30}

\bibitem{AvL}
M. Avellaneda, F.-H. Lin,
\emph{Compactness methods in the theory of homogenization. II. Equations in nondivergence form}, 
Comm. Pure Appl. Math., 42(2):139--172, 1989.

\bibitem{BLP}
A. Bensoussan, J.-L. Lions, G. Papanicolaou,
Asymptotic analysis for periodic structures,
AMS Chelsea Publishing, Providence, RI, 2011. xii+398 pp. ISBN: 978-0-8218-5324-5.

\bibitem{CS}
L. A. Caffarelli, P. E. Souganidis, 
\emph{A rate of convergence for monotone finite difference approximations to fully nonlinear uniformly elliptic PDE},
Comm. Pure Appl. Math. 61  (2008), 1--17.

\bibitem{CSS}
Y. Capdeboscq, T. Sprekeler, E. S\"uli,
\emph{Finite Element Approximation of Elliptic Homogenization Problems in Nondivergence-Form},
ESAIM: M2AN, Forthcoming article,
DOI: https://doi.org/10.1051/m2an/2019093.

\bibitem{ES}
B. Engquist, P. E. Souganidis,
\emph{Asymptotic and numerical homogenization},
Acta Numerica (2008), pp. 147--190.

\bibitem{Ev}
L. C. Evans, 
\emph{The perturbed test function method for viscosity solutions of nonlinear PDE},
 Proc. R. Soc. Edinb. Sect. A Math. 111(3--4), 359--375 (1989).

 \bibitem{Fre}
 M.I. Freidlin, 
 \emph{Dirichlet's problem for an equation with periodic coefficients depending on a small parameter}, 
 Theory Prob. Appl., 9, 121--125, 1964.

\bibitem{FrO}
B. D. Froese, A. M. Oberman,
\emph{Numerical averaging of non-divergence structure elliptic operators},
Commun. Math. Sci., Vol. 7, No. 4, pp. 785--804, 2009.

\bibitem{JKO}
V. V. Jikov, S. M. Kozlov, O. A. Oleinik,
Homogenization of Differential Operators and Integral Functionals,
Translated from the Russian by G.A. Yosifian, 
Springer-Verlag Berlin Heidelberg 1994,
xii+570 pp. ISBN: 3-540-54809-2.

\bibitem{KL}
S. Kim, K.-A. Lee, 
\emph{Higher order convergence rates in theory of homogenization: equations of non-divergence form},
Arch. Ration. Mech. Anal. 219 (2016), no. 3, 1273--1304.

\end {thebibliography}

\end{document}